\theoremstyle{plain}
\newtheorem{theorem}{Theorem}
\newtheorem{corollary}{Corollary}
\newtheorem{lemma}{Lemma}
\theoremstyle{definition}
\newtheorem{remark}{Remark}
\newcommand{\CC}{\mathbb{C}}
\newcommand{\FF}{\mathbb{F}}
\newcommand{\Fp}{\mathbb{F}_p}
\newcommand{\Fq}{\mathbb{F}_q}
\newcommand{\Fqt}{\mathbb{F}_{q^2}}
\newcommand{\PPP}{\mathcal P}
\def\PG{\mathrm{PG}}
\newcommand{\npmatrix}[1]{\left[ \begin{matrix} #1 \end{matrix} \right]}
\newcommand{\rank}{\mathrm{rank}}
\begin{document}

\title[Constant rank-distance sets]
{Constant rank-distance sets of hermitian matrices and partial spreads in hermitian polar spaces}
\author{Rod Gow, Michel Lavrauw, John Sheekey and Fr\'ed\'eric Vanhove}
\address{
Rod Gow: Mathematics Department\\
University College\\
Belfield, Dublin 4\\
Ireland
\newline Michel Lavrauw: Department of Management and Engineering\\
Universit\`a di Padova\\
Italy
\newline John Sheekey: Department of Management and Engineering\\
Universit\`a di Padova\\
Italy
\newline Fr\'ed\'eric Vanhove: Department of Mathematics\\ Ghent University\\Belgium
}
\email{rod.gow@ucd.ie; michel.lavrauw@unipd.it; johnsheekey@gmail.com;fvanhove@cage.ugent.be}
\keywords{(partial) spread, hermitian variety, hermitian matrix, rank-distance}
\subjclass[2010]{05B25, 51E23, 51A50, 15A03}
\begin{abstract}

In this paper we investigate partial spreads of $H(2n-1,q^2)$ through the related notion of partial spread sets of hermitian matrices, and the more general notion of constant rank-distance sets. We prove a tight upper bound on the maximum size of a linear constant rank-distance set of hermitian matrices over finite fields, and as a consequence prove the maximality of extensions of symplectic semifield spreads as partial spreads of $H(2n-1,q^2)$. We prove upper bounds for constant rank-distance sets for even rank, construct large examples of these, and construct maximal partial spreads of $H(3,q^2)$ for a range of sizes.

\end{abstract}
\maketitle

\section{Introduction}
\label{sect:intro}

A \emph{partial $(t-1)$-spread} of a projective or polar space $\PPP$ is a set $S$ of pairwise disjoint $(t-1)$-dimensional subspaces of $\PPP$. A partial spread is called a \emph{spread} if the elements of $S$ cover $\PPP$. A $(t-1)$-spread of $\PG(N-1,q)$ exists if and only if $t$ divides $N$ (\cite{Segre64}).

A \emph{partial spread set} $U$ is a set of $n \times n$ matrices over a field $F$ such that 
\begin{align*}
(i)~\rank(A-B) = n & ~\textrm{for all $A,B \in U$, $A \ne B$};\\
(ii)~\rank(A) = n & ~\textrm{for all $A \in U$, $A \ne 0$}.
\end{align*}
We will denote the space of $n\times n$ matrices over $F$ by $M_n(F)$, and the spaces of hermitian and symmetric $n\times n$ matrices over $F$ by $H_n(F)$ and $S_n(F)$, respectively.

It is well known that a partial spread set in $M_n(F)$ defines a partial spread in the projective space $\PG(2n-1,F)$; a partial spread set in $H_n(K)$ defines a partial spread in the hermitian polar space $H(2n-1,K)$; and a partial spread set in $S_n(F)$ defines a partial spread in the symplectic polar space $W(2n-1,F)$. We will provide further explanation of this in Section \ref{sect:pre}.

In this paper we investigate partial spread sets as a special case of the following more general definition:

A \emph{constant rank-distance $k$ set} is a set of $n \times n$ matrices $U$ such that
\begin{align*}
(i)~\rank(A-B) = k & ~\textrm{for all $A,B \in U$, $A \ne B$};\\
(ii)~\rank(A) = k & ~\textrm{for all $A \in U$, $A \ne 0$}.
\end{align*}
This name follows the definition of \emph{rank-distance} in \cite{Gab1985}. Clearly a constant rank-distance $n$ set is a partial spread set. Note that a set satisfying only property $(i)$ implies the existence of a constant rank-distance $k$ set of the same size. For suppose $U'$ is such a set, and choose some $A \in U'$. Then it is easily verified that $U := \{A-B~:~B \in U'\}$ is a constant rank-distance $k$ set.

A constant rank-distance $k$ set (resp. partial spread set) is said to be \emph{maximal} if it is not strictly contained in a larger constant rank-distance $k$ set (resp. partial spread set).

If a constant rank-distance $k$ set (resp. partial spread set) is closed under $F'$-linear combinations for some field $F'$, we refer to it as an {\em $F'$-linear constant rank-distance $k$ set} (resp. partial spread set). We will often omit the specification of a particular field.

In Section \ref{sect:char}, we will prove a new bound for linear constant rank-distance sets of hermitian matrices over finite fields, using properties of characters defined on function spaces. We will treat constant-rank distance sets in general, using the theory of association schemes, in Section \ref{sec:assoc}. In Section \ref{sect:mps}, we will use the results of Section \ref{sect:char} to prove new results on maximal partial spreads of $H(2n-1,q^2)$. We will also construct maximal partial spreads of $H(3,q^2)$ of all sizes in the range $[q^2+1,q^2+q]$. Finally in Section \ref{sect:crs}, we will construct some constant rank-distance sets which are larger than the largest possible linear constant rank-distance sets, including partial spread sets larger than the largest possible linear partial spread sets.

\section{Partial spreads and subspace codes}
\label{sect:pre}

We recall now the connection between partial spreads and partial spread sets (\cite[p. 220]{Dembowski}), and the definition of a subspace code. Given an $n \times n$ matrix $A$, we can define an $(n-1)$-dimensional subspace $S_A$ of $\PG(2n-1,q)$ as follows:
\[
S_A := \langle \npmatrix{u\\Au}~:~u \in \Fq^n\setminus\{0\}  \rangle
\]
Given any two matrices $A$ and $B$, it is easy to see that
\[
\dim(S_A \cap S_B) = n-\rank(A-B)-1,
\]
and hence $S_A \cap S_B = \emptyset$ if and only if $\rank(A-B)=n$.
We define also the $(n-1)$-dimensional subspace $S_{\infty}$ of $\PG(2n-1,q)$ by
\[
S_{\infty} := \langle \npmatrix{0\\u}~:~u \in \Fq^n \setminus\{0\} \rangle.
\]
Again it is clear that $S_A \cap S_{\infty} = \emptyset$ for all $A \in M_n(\Fq)$, and the subspaces $S_A$ are precisely the $(n-1)$-spaces which are disjoint from $S_{\infty}$. 

Hence if $U$ is a partial spread set in $M_n(\Fq)$, then the set
\[
D_U := \{S_A~:~A \in U\}\cup \{S_{\infty}\}
\]
is a partial spread of $\mathrm{PG}(2 n -1,q)$, with $|D_U| = |U|+1$. If $D_U$ is a spread, then $U$ is called a \emph{spread set}. A spread set in $M_n(\Fq)$ has size $q^n$. Conversely, any partial spread $D$ of size at least two is equivalent to a partial spread containing both $S_0$ and $S_{\infty}$, and hence is equivalent to $D_U$ for some partial spread set $U$. 

Every spread $D$ of $\PG(2n-1,q)$ defines a \emph{translation plane}, via the Andr\'e-Bruck-Bose construction. If the dual of this plane is also a translation plane, it is called a \emph{semifield plane}. Such a plane can be coordinatized by a \emph{semifield} (see Chapters 5 and 6 of \cite{HuPi1973} for further details on coordinatization of a projective plane). This occurs if and only if the spread $D$ corresponds to some spread set which is linear over a subfield of $\Fq$. For this reason, a linear spread set is also called a \emph{semifield spread set}. For more on semifields we refer to \cite{LaPo2011}.

A \emph{hermitian polar space} $H(t-1,q^2)$ is the geometry of subspaces of $\PG(t-1,q^2)$ which are totally isotropic with respect to some non-degenerate hermitian form on $\Fqt^t$. It is well known that the maximum dimension of a subspace contained in $H(t-1,q^2)$ is $\lfloor \frac{t}{2} \rfloor -1$.

The projective geometry $\PG(t-1,q^2)$ contains $\PG(t-1,q)$ as a subgeometry. A \emph{symplectic polar space} $W(t-1,q)$ is the geometry of subspaces in $\PG(t-1,q)$ which are isotropic with respect to some non-degenerate symplectic form on $\Fq^t$. It is necessary that $t$ be even.

Consider now the space $H(2n-1,q^2)$. We will let $x\mapsto \overline{x}$ denote the Frobenius automorphism $x \mapsto x^q$.  For any matrix $A$, we will denote by $\overline{A}$ this automorphism applied entrywise. We take the non-degenerate hermitian form defined by the matrix
\[
\npmatrix{0_n&a I_n\\-a I_n&0_n},
\]
where $a\in \Fqt^{\times}$ is such that $\overline{a} = -a$. Then given a matrix $A \in M_n(\Fqt)$, the space $S_A$ is contained in $H(2n-1,q^2)$ if and only if it is hermitian, since
\[
\npmatrix{\overline{u}^T& \overline{u}^T \overline{A}^T} \npmatrix{0_n&a I_n\\-a I_n&0_n} \npmatrix{u\\Au} = 0
\]
holds for all $u \in \Fqt^n$ if and only if
\[
a \overline{u}^T (A - \overline{A}^T)u = 0
\]
for all $u \in \Fqt^n$, and hence if and only if $A = \overline{A}^T$, as claimed. 

Hence a partial spread set $U$ in $H_n(\Fqt)$ leads to a partial spread $D_U$ in $H(2n-1,q^2)$, with $|D_U| = |U|+1$. Conversely, it is well known (and follows from the discussion at the beginning of this section) that every partial spread in $\PG(2n-1,q)$ is equivalent to $D_U$ for some partial spread set $U$ in $M_n(\Fq)$ \cite{DembOs}, and hence every partial spread in $H(2n-1,q^2)$ is equivalent to $D_U$ for some partial spread set $U$ in $H_n(\Fqt)$.

Clearly the intersection of $H(2n-1,q^2)$ with $\PG(2n-1,q)$ is a symplectic polar space $W(2n-1,q)$, with symplectic form defined by the matrix
\[
\npmatrix{0_n& I_n\\- I_n&0_n}.
\]
A partial spread in $W(2n-1,q)$ then leads to a partial spread set in $S_n(\Fq)$, which is of course an $\Fq$-subspace of $H_n(\Fqt)$. In fact it is also a partial spread set in $H_n(\Fqt)$, as the rank of a matrix does not change over an extension field. For any subspace in $W(2n-1,q)$, the subspace obtained by extending scalars from $\Fq$ to $\Fqt$ is a subspace in $H(2n-1,q^2)$. Given a partial spread $D$ of $W(2n-1,q)$, we refer to the partial spread of $H(2n-1,q^2)$ obtained by extending each subspace as the \emph{extension} of $D$. An element $A \in S_n(\Fq)$ defines both a subspace $S_A$ in $W(2n-1,q)$, and a subspace $S'_A$ in $H(2n-1,q^2)$. A partial spread set $U$ in $S_n(\Fq)$ hence defines both a partial spread $D_U$ of  $W(2n-1,q)$, and a partial spread $D'_U$ of $H(2n-1,q^2)$. It is clear that $S'_A$ is in fact the extension of $S_A$, and $D'_U$ is the extension of $D_U$.

Spreads exist in $W(2n-1,q)$ for all $q,n$, and have size $q^n+1$. In fact \emph{semifield} spreads exist in $W(2n-1,q)$ for all $q,n$, as Kantor \cite{Kantor03} showed that a semifield spread in $\PG(2n-1,q)$ is symplectic if and only if the semifield it defines is Knuth-equivalent to a \emph{commutative semifield}. Such a spread is called a \emph{symplectic semifield spread}, see \cite{LaPo2011}.

Much study has been dedicated to the maximum size of a partial spread in $H(2n-1,q^2)$, or equivalently the maximum size of a partial spread set in $H_n(\Fqt)$. Particular attention has been paid to the case $H(3,q^2)$, or equivalently $H_2(\Fqt)$. See \cite{DeBKlMet12} for an overview of the known results.

A \emph{subspace code} is a set of subspaces of $\PG(N,q)$ together with the distance function $d(S,T) = \dim(\langle S,T \rangle)-\dim(S\cap T)$. A subspace code $C$ such that $\dim(S)=t-1$ for all $S \in C$ is called a \emph{constant dimension $t$ code} (note that $t$ is the vectorial dimension of the elements of the code, to follow the convention in the literature, where subspace codes are usually considered as subsets of the set of subspaces of a vector space). Subspace codes and constant dimension codes are of interest in network coding, see for example \cite{KhSiKs09} for a survey, and the connection with sets of matrices with particular rank properties (``rank metric codes'') has been studied in for example \cite{SiKsKo08}. 

Given a constant rank-distance $k$ set $U$ of $n \times n$ matrices, define the subset $D'_U = \{S_A~:~A \in U\}\subseteq \PG(2n-1,F)$, where $S_A$ is defined as above. Then $D'_U$ is a constant dimension $n$ code. Moreover, $d(S_A,S_B) = 2k$ for all $A,B \in U$. Hence $D'_U$ is also a \emph{constant distance code}, and $|D'_U| = |U|$. Note that if $k<n$ we do not include the space $S_{\infty}$, as $d(S_A,S_{\infty}) = 2n \neq 2k$. 

When $k=n$ we can include $S_{\infty}$, and $D_U = D'_U \cup \{S_{\infty}\}$ is a partial spread, or equivalently a constant dimension $n$, constant distance $2n$ code. If $C$ is also a spread, it is more commonly referred to as a \emph{spread code}. See for example \cite{MaGoRo08} for more on spread codes. While subspace codes are normally studied in $\PG(N,q)$, in this work we will focus on those in $H(2n-1,q^2)$.

We summarize this discussion in the following lemmas for future reference.
\begin{lemma}\label{lem:partialspreadcorrespondence}
There exists a partial spread set in $H_n(\Fqt)$ of size $N$ if and only if there exists a partial spread in $H(2n-1,q^2)$ of size $N+1$.
\end{lemma}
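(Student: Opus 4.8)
The plan is to read off both directions from the correspondence $A \mapsto S_A$ developed in this section, taking care over the normalisation used in the converse.

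For the forward direction, suppose $U$ is a partial spread set in $H_n(\Fqt)$ of size $N$. I would set $D_U = \{S_A : A \in U\} \cup \{S_\infty\}$ and verify that it is a partial spread of $H(2n-1,q^2)$ of size $N+1$. Each $S_A$ lies in $H(2n-1,q^2)$ because $A$ is hermitian, $S_\infty$ is totally isotropic for the chosen form, and every element is an $(n-1)$-dimensional subspace, i.e.\ a generator. For pairwise disjointness, property $(i)$ gives $\rank(A-B)=n$ and hence $S_A \cap S_B = \emptyset$ for distinct $A,B \in U$, while $S_A \cap S_\infty = \emptyset$ holds for every $A$; the same rank computation shows the subspaces are pairwise distinct, so $|D_U| = N+1$. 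Only property $(i)$ is needed here.

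For the converse, let $D$ be a partial spread of $H(2n-1,q^2)$ of size $N+1$. The key step is to normalise: by Witt's theorem the unitary group acts transitively on the generators of $H(2n-1,q^2)$, so after applying a suitable collineation I may assume $S_\infty \in D$. The remaining $N$ generators are then $(n-1)$-spaces disjoint from $S_\infty$, hence of the form $S_A$, and since each is totally isotropic the corresponding $A$ is hermitian. Writing $U' = \{A : S_A \in D,\ S_A \neq S_\infty\}$, pairwise disjointness forces $\rank(A-B)=n$ for distinct $A,B$, so $U'$ satisfies property $(i)$ and has size $N$.

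The one point requiring attention — and the only genuine obstacle — is that $U'$ need not satisfy property $(ii)$: property $(ii)$ is automatic only when $0 \in U'$, equivalently when $S_0 \in D$, and there is no reason for the normalised $D$ to contain $S_0$, so some $A \in U'$ could be singular. To repair this I would invoke the translation trick already recorded in the introduction: fixing any $A_0 \in U'$ and setting $U = \{A_0 - A : A \in U'\}$ produces a set of the same size $N$ which contains $0$ and therefore satisfies both $(i)$ and $(ii)$, and which still lies in $H_n(\Fqt)$ since this space is closed under subtraction. This $U$ is the required partial spread set, completing the equivalence.
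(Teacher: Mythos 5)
Your argument is correct and matches the paper's: the forward direction is the construction of $D_U$ given in Section \ref{sect:pre}, and the converse is the normalisation argument there (the paper moves $D$ to contain both $S_0$ and $S_\infty$ in one step, whereas you first arrange $S_\infty \in D$ and then apply the translation $A \mapsto A_0 - A$ to recover property $(ii)$ --- the same normalisation split into two stages). Your explicit attention to property $(ii)$ in the converse is a point the paper glosses over, but the repair you give is exactly the translation trick the paper records in the introduction.
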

\begin{lemma}\label{lem:gendimcorrespondence}
If there exists a constant rank-distance $k$ set in $H_n(\Fqt)$ of size $N$, then there exists a constant dimension $n$, constant distance $2k$ code in $H(2n-1,q^2)$ of size $N$.
\end{lemma}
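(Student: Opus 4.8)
The plan is to apply the matrix-to-subspace construction of this section directly to the members of $U$, exploiting the fact that the hermitian hypothesis is precisely what forces the resulting subspaces into $H(2n-1,q^2)$. First I would set $D'_U = \{S_A~:~A \in U\}$, where $S_A \subseteq \PG(2n-1,q^2)$ is the subspace associated to $A$ as defined above. Since every $A \in U$ lies in $H_n(\Fqt)$ and is therefore hermitian, the computation carried out earlier in this section shows $S_A \subseteq H(2n-1,q^2)$ for each $A$. Each $S_A$ is an $(n-1)$-dimensional projective subspace, i.e.\ has vectorial dimension $n$, so $D'_U$ is automatically a constant dimension $n$ code contained in $H(2n-1,q^2)$.

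The core of the argument is the distance computation. For distinct $A,B \in U$ the rank-distance hypothesis gives $\rank(A-B)=k$, and the intersection formula $\dim(S_A \cap S_B) = n-\rank(A-B)-1$ then yields projective dimension $n-k-1$, i.e.\ vectorial dimension $n-k$, for $S_A \cap S_B$. Feeding this into the Grassmann identity $\dim(\langle S_A,S_B\rangle) = \dim(S_A)+\dim(S_B)-\dim(S_A \cap S_B)$ and then into the subspace-code distance $d(S,T) = \dim(\langle S,T\rangle)-\dim(S\cap T)$, I obtain $d(S_A,S_B) = 2n - 2(n-k) = 2k$, establishing the constant distance $2k$ property. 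For the size claim I would check that $A \mapsto S_A$ is injective on $U$: whenever $A \neq B$ the rank $k \geq 1$ forces $\dim(S_A \cap S_B) = n-k-1 < n-1 = \dim(S_A)$, so $S_A \neq S_B$ and hence $|D'_U| = |U| = N$.

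The only genuine subtlety, rather than a real obstacle, is that the intersection formula and the membership computation of this section were written over $\Fq$ in $\PG(2n-1,q)$; I would note that both are purely linear-algebraic and carry over verbatim with $\Fq$ replaced by $\Fqt$, so that the entire distance bookkeeping takes place inside $H(2n-1,q^2)$ as required. Beyond this, the main thing demanding care is keeping the projective and vectorial dimension conventions straight throughout the distance calculation.
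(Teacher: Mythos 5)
Your proposal is correct and follows essentially the same route as the paper, which proves this lemma by the discussion immediately preceding it in Section \ref{sect:pre}: take $D'_U = \{S_A : A \in U\}$, use the hermitian condition to place each $S_A$ in $H(2n-1,q^2)$, and read off $d(S_A,S_B)=2k$ from the intersection formula $\dim(S_A\cap S_B)=n-\rank(A-B)-1$. Your added remarks on injectivity of $A\mapsto S_A$ and on the $\Fq$ versus $\Fqt$ bookkeeping are correct and only make explicit what the paper leaves implicit.
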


\section{Character theory and constant rank-distance sets}
\label{sect:char}

We will prove a new upper bound on linear constant rank-distance sets by considering the set of hermitian matrices as an additive group, and investigating particular characters on this group. Throughout the rest of this paper we will assume $q=p^e$ for some prime $p$ and positive integer $e$.

We denote $W = \Fqt^n$. We will represent this space as column vectors; i.e. $W$ will be the vector space of $n \times 1$ matrices with entries in $\Fqt$.

Recall that $H_n(\Fqt)$ is a vector space over $\Fq$, but \emph{not} over $\Fqt$. To each hermitian matrix $h$ we associate a map from $W$ to $\Fq$ (which by abuse of notation we will also denote by $h$), by
\begin{align*}
h &:W \rightarrow \Fq \\
	&:w \mapsto \overline{w}^T h w.
\end{align*}
Note that $h(w)$ does indeed lie in $\Fq$ for all $w \in W$, as $\overline{h(w)} = \overline{h(w)^T} = \overline{w^T h^T \overline{w}} = \overline{w}^T \overline{h^T} w = \overline{w}^T h w = h(w)$.

For any $h \in H_n(\Fqt)$, and any $a \in \Fq$, we define the number
\[
N_h(a) := \#\{w \in W~|~h(w) = a\}.
\]

The proof of the following well-known lemma can also be found in \cite{JSThesis}.
\begin{lemma}\label{lem:Nha}
For any $h \in H_n(\Fqt)$, $\rank(h)=k$, we have
\[
N_{h}(a) = \left\{\begin{array}{ll}
								 q^{2n-k-1}(q^k+(-1)^k(q-1)) & \textrm{if $a=0$}\\
								 q^{2n-k-1}(q^k-(-1)^k) & \textrm{if $a \in \Fq^{\times}$} .
						 \end{array}\right.
\]
\end{lemma}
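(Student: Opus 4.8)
The plan is to count the number of vectors $w \in W = \Fqt^n$ with $h(w) = a$ by exploiting the classification of hermitian forms up to equivalence. A hermitian form of rank $k$ can, after a change of basis, be put into a diagonal shape with $k$ nonzero diagonal entries (each normalizable, since the norm map $\Fqt^\times \to \Fq^\times$ is surjective) and $n-k$ zero entries. Since the rank and the value distribution $N_h(a)$ are invariant under the substitution $w \mapsto Pw$ for $P \in GL_n(\Fqt)$ (this replaces $h$ by the equivalent form $\overline{P}^T h P$), **I would first reduce** to the case where $h$ is the standard rank-$k$ hermitian form $h(w) = \sum_{i=1}^k \overline{w_i} w_i = \sum_{i=1}^k N(w_i)$, where $N(x) = x^{q+1} = \overline{x}x$ denotes the norm from $\Fqt$ to $\Fq$. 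The $n-k$ coordinates not appearing contribute a free factor of $q^{2(n-k)}$ to every count, so it suffices to compute the distribution of $\sum_{i=1}^k N(w_i)$ for $w \in \Fqt^k$ and then multiply.

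The core of the argument is therefore to determine, for the rank-$k$ case, the number $M_k(a) := \#\{w \in \Fqt^k : \sum_{i=1}^k N(w_i) = a\}$. **I would compute this via characters**, which matches the spirit of the section. Fixing a nontrivial additive character $\psi$ of $\Fq$, one writes
\[
M_k(a) = \frac{1}{q}\sum_{t \in \Fq} \psi(-ta) \left(\sum_{x \in \Fqt} \psi(t\, N(x))\right)^k.
\]
The inner exponential sum $G(t) := \sum_{x \in \Fqt} \psi(t\, N(x))$ is essentially a Gauss-type sum for the norm form. For $t = 0$ it equals $q^2$. **The key computational step** is to evaluate $G(t)$ for $t \in \Fq^\times$: since the norm map is $(q+1)$-to-$1$ from $\Fqt^\times$ onto $\Fq^\times$, we get $G(t) = 1 + (q+1)\sum_{b \in \Fq^\times} \psi(tb) = 1 + (q+1)(-1) = -q$, using $\sum_{b \in \Fq}\psi(tb) = 0$. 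This clean value $G(t) = -q$ for all $t \neq 0$ is what makes the whole computation go through.

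**Substituting back**, for $a = 0$ we obtain
\[
M_k(0) = \frac{1}{q}\left(q^{2k} + (q-1)(-q)^k\right) = q^{2k-1} + (q-1)(-1)^k q^{k-1} = q^{k-1}\left(q^k + (-1)^k(q-1)\right),
\]
while for $a \in \Fq^\times$ the $\psi(-ta)$ factors over $t \in \Fq^\times$ sum to $-1$, giving
\[
M_k(a) = \frac{1}{q}\left(q^{2k} - (-q)^k\right) = q^{k-1}\left(q^k - (-1)^k\right).
\]
**Finally**, multiplying by the free factor $q^{2(n-k)}$ from the degenerate coordinates yields $N_h(a) = q^{2n-k-1}(q^k + (-1)^k(q-1))$ when $a = 0$ and $N_h(a) = q^{2n-k-1}(q^k - (-1)^k)$ when $a \in \Fq^\times$, exactly as stated. **The main obstacle** I anticipate is purely the bookkeeping in the character sum — correctly tracking the $t = 0$ term separately from the $t \in \Fq^\times$ terms and handling the factor $\sum_{t\in\Fq^\times}\psi(-ta) = -1$ for $a \neq 0$ versus $q-1$ for $a = 0$; the evaluation of $G(t)$ itself is the one genuinely substantive ingredient, and it rests only on the surjectivity and uniform fibers of the norm map together with the vanishing of the full additive character sum.
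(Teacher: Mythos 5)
Your proof is correct. Note that the paper itself does not prove this lemma; it is stated as well known with a reference to \cite{JSThesis}, so there is no in-paper argument to compare against. Your route --- diagonalize the rank-$k$ hermitian form to $\sum_{i=1}^k N(w_i)$ using the surjectivity of the norm (the substitution $w\mapsto Pw$ replaces $h$ by the congruent form $\overline{P}^Th P$ and leaves $N_h(a)$ unchanged, and the $n-k$ free coordinates contribute the factor $q^{2(n-k)}$), then evaluate the count by orthogonality of additive characters using $G(t)=\sum_{x\in\Fqt}\psi(tN(x))=-q$ for $t\neq 0$ --- is a clean, self-contained derivation, and all the arithmetic checks out: $q^{2(n-k)}\cdot q^{k-1}=q^{2n-k-1}$ and the two cases $\sum_{t\in\Fq^\times}\psi(-ta)=q-1$ (for $a=0$) versus $-1$ (for $a\neq 0$) reproduce exactly the stated values. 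The more common textbook derivation goes by induction on $n$ via counting points on (degenerate) hermitian varieties; your character-sum argument is an equally valid and arguably more economical alternative, and it fits naturally with the character-theoretic machinery the paper develops immediately afterwards (indeed Lemma \ref{lem:chiher} essentially re-encodes the same Gauss-sum information).
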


We now view $(H_n(\Fqt),+)$ as a finite group. Recall that a \emph{linear character} on a group $G$ is a homomorphism from $G$ into $\CC^{\times}$. A \emph{character} of an abelian group is a finite sum of linear characters. See for example \cite{Isaacs}. We can define some linear characters on $H_n(\Fqt)$ as follows. Let $\epsilon$ be a primitive $p$-th root of unity in $\CC$, and let $tr$ denote the trace map from $\Fq$ to $\Fp$. For each $w \in W$, define the function $\chi_{w}:H_n(\Fqt) \rightarrow \CC$ by
\[
\chi_{w} :h \mapsto \epsilon^{tr(h(\omega))}.
\]
It is clear that $\chi_{w}$ is a linear character, as $\chi_{w}(h+h') = \chi_{w}(h) \cdot \chi_{w}(h')$ for all $h,h' \in H_n(\Fqt)$.

Define now a character $\chi$ by
\[
\chi := \sum_{w \in W} \chi_{w}.
\]
Then by definition it is clear that
\[
\chi(h) = \sum_{w \in W} \epsilon^{tr(h(w))} = \sum_{a \in \Fq} N_h(a) \epsilon^{tr(a)}.
\]

This leads to the following lemma.

\begin{lemma}
\label{lem:chiher}
Suppose $h \in H_n(\Fqt)$ has rank $k$. Then
\[
\chi(h) = (-1)^k q^{2n-k}.
\]
\end{lemma}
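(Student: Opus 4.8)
The plan is to compute $\chi(h)$ directly from the formula $\chi(h) = \sum_{a \in \Fq} N_h(a)\, \epsilon^{tr(a)}$ established just before the statement, substituting the explicit values of $N_h(a)$ given by Lemma \ref{lem:Nha}. First I would split the sum into the $a=0$ term and the sum over $a \in \Fq^{\times}$, writing
\[
\chi(h) = N_h(0) + \sum_{a \in \Fq^{\times}} N_h(a)\, \epsilon^{tr(a)}.
\]
Since Lemma \ref{lem:Nha} tells us that $N_h(a)$ takes the \emph{same} value $q^{2n-k-1}(q^k-(-1)^k)$ for every nonzero $a$, this common value factors out of the second sum, leaving $q^{2n-k-1}(q^k-(-1)^k)\sum_{a \in \Fq^{\times}} \epsilon^{tr(a)}$.

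The key arithmetic fact I would invoke is the standard character-sum identity $\sum_{a \in \Fq} \epsilon^{tr(a)} = 0$, which holds because $tr:\Fq \to \Fp$ is a surjective $\Fp$-linear map and $\epsilon$ is a primitive $p$-th root of unity (equivalently, $a \mapsto \epsilon^{tr(a)}$ is a nontrivial additive character of $\Fq$, whose sum over the whole group vanishes). Consequently $\sum_{a \in \Fq^{\times}} \epsilon^{tr(a)} = -\epsilon^{tr(0)} = -1$. Substituting this gives
\[
\chi(h) = q^{2n-k-1}(q^k+(-1)^k(q-1)) - q^{2n-k-1}(q^k-(-1)^k).
\]

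The remaining step is purely algebraic simplification: factoring out $q^{2n-k-1}$ and collecting terms yields $q^{2n-k-1}\bigl((-1)^k(q-1) + (-1)^k\bigr) = q^{2n-k-1}(-1)^k q = (-1)^k q^{2n-k}$, which is the claimed value. I expect no genuine obstacle here, since both lemmas needed are already available and the computation is elementary; the only point requiring a moment's care is the character-sum cancellation, which is where the unexplained $(-1)^k q^{2n-k}$ formula acquires its clean form — the large $q^{2n-k-1} q^k$ contributions from the $a=0$ and $a \neq 0$ terms exactly cancel, leaving only the rank-dependent part. I would therefore present the proof as the two-line substitution above followed by the factorization, emphasizing the cancellation as the conceptual content.
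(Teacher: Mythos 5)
Your proposal is correct and follows exactly the same route as the paper's proof: substituting the values of $N_h(a)$ from Lemma \ref{lem:Nha} into $\chi(h)=\sum_{a\in\Fq}N_h(a)\epsilon^{tr(a)}$, using $\sum_{a\in\Fq^{\times}}\epsilon^{tr(a)}=-1$, and simplifying. The algebra checks out, so nothing further is needed.
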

\begin{proof}
By the above formula, we have $\chi(h) = \sum_{a \in \Fq} N_h(a) \epsilon^{tr(a)}$. But by Lemma \ref{lem:Nha}, $N_h(0)= q^{2n-k-1}(q^k+(-1)^k(q-1))$, and $N_h(a) = q^{2n-k-1}(q^k-(-1)^k)$ for all non-zero $a\in\Fq$, and so
\[
\chi(h) = q^{2n-k-1}(q^k+(-1)^k(q-1)) + q^{2n-k-1}(q^k-(-1)^k) \sum_{a \in \Fq^{\times}} \epsilon^{tr(a)}.
\]
But $\sum_{a \in \Fq^{\times}} \epsilon^{tr(a)} = -1$, giving us
\[
\chi(h) = q^{2n-k-1}(q^k+(-1)^k(q-1)) - q^{2n-k-1}(q^k-(-1)^k) = (-1)^k q^{2n-k},
\]
proving the claim.
\end{proof}

Recall the definition of the inner product of characters on any group $G$:
\[
\langle \phi,\psi\rangle :=\frac{1}{|G|} \sum_{g \in G} \phi(g)\overline{\psi(g)}.
\]
It is well known that this number is a non-negative integer \cite{Isaacs}. The restriction of a character on a group $G$ to a subgroup $H$ is again a character on $H$. We denote the trivial character by $1_G$, i.e. $1_G(g) = 1$ for all $g \in G$.

Suppose now we have a subgroup $U$ of $(H_n(\Fqt),+)$. Clearly $U$ is an $\Fp$-subspace of $H_n(\Fqt)$. We will use the restriction of the previously defined character $\chi$ on $(H_n(\Fqt),+)$ to $U$ to obtain upper bounds on the sizes of certain classes of subgroups.

\begin{lemma}
\label{lem:chihersp}
Let $U$ be an $\Fp$-subspace of $H_n(\Fqt)$, with $|U| = p^d$. Let $A_k$ denote the number of elements of $U$ of rank $k$. Then
\[
\sum_{k=0}^n  (-1)^k A_k p^{(2n-k)e - d}
\]
is a non-negative integer.
\end{lemma}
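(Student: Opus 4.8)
The plan is to recognize the stated sum as the inner product of the restricted character $\chi|_U$ with the trivial character $1_U$ on the subgroup $U$, and then to invoke the general fact (quoted just before the lemma) that such an inner product is a non-negative integer. Concretely, I would start from the definition
\[
\langle \chi|_U, 1_U\rangle = \frac{1}{|U|}\sum_{u \in U}\chi(u)\,\overline{1_U(u)} = \frac{1}{|U|}\sum_{u \in U}\chi(u),
\]
since $1_U(u)=1$ for every $u \in U$.

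Next I would evaluate the sum $\sum_{u\in U}\chi(u)$ by partitioning $U$ according to the rank of its elements. By definition $A_k$ counts the elements of $U$ of rank $k$, and by Lemma \ref{lem:chiher} each such element $u$ contributes $\chi(u) = (-1)^k q^{2n-k}$. Hence
\[
\sum_{u \in U}\chi(u) = \sum_{k=0}^n (-1)^k A_k\, q^{2n-k}.
\]
Substituting $|U| = p^d$ and writing $q = p^e$, so that $q^{2n-k} = p^{(2n-k)e}$, gives
\[
\langle \chi|_U, 1_U\rangle = \frac{1}{p^d}\sum_{k=0}^n (-1)^k A_k\, p^{(2n-k)e} = \sum_{k=0}^n (-1)^k A_k\, p^{(2n-k)e - d},
\]
which is exactly the quantity in the statement.

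It then remains to justify that this number is a non-negative integer, and here the one point to check carefully is that $\chi|_U$ really is a character of $U$ in the representation-theoretic sense, so that the inner-product theorem applies. This holds because $\chi = \sum_{w \in W}\chi_{w}$ is a finite sum of linear characters of $(H_n(\Fqt),+)$, each restriction $\chi_{w}|_U$ is again a linear character of the subgroup $U$, and therefore $\chi|_U$ is a character of $U$ with non-negative integer multiplicities. Since $1_U$ is an irreducible (linear) character of $U$, the inner product $\langle \chi|_U, 1_U\rangle$ is simply the multiplicity of $1_U$ in the decomposition of $\chi|_U$, hence a non-negative integer. I do not expect a genuine obstacle in this argument: the computation is direct, and the only subtlety is confirming that we are applying the non-negativity fact to an actual character rather than to an arbitrary class function, which the expression of $\chi$ as a sum of the $\chi_{w}$ guarantees.
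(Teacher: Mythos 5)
Your proposal is correct and follows exactly the paper's argument: both identify the sum as $\langle \chi|_U, 1_U\rangle$, evaluate it via Lemma \ref{lem:chiher} by grouping elements of $U$ according to rank, and invoke the non-negativity of inner products of characters. The extra care you take in confirming that $\chi|_U$ is a genuine character (as a sum of restricted linear characters) is a point the paper handles implicitly with its remark that restrictions of characters are characters.
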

\begin{proof}
Consider the character $\chi$ on $H_n(\Fqt)$. Taking the inner product $\langle \chi|_U,1_U \rangle$ gives us
\[
\langle \chi|_U,1_U \rangle =\frac{1}{p^d} \sum_{h \in U} \chi(h).
\]
But by Lemma \ref{lem:chiher}, $\chi(h) = (-1)^k q^{2n-k} = (-1)^k p^{(2n-k)e}$ if $\rank(h)=k$, and so 
\[
\langle \chi|_U,1_U \rangle = \sum_{k=0}^n  (-1)^k A_k p^{(2n-k)e - d}
\]
is a non-negative integer, proving the result.
\end{proof}

This leads to the following.
\begin{theorem}
\label{thm:boundher}
Let $U$ be a linear constant rank-distance $k$ set of $H_n(\FF_{q^2})$, $k \ne 0$. Then
\[
|U| \leq \left\{ \begin{array}{cc}
q^k & \textrm{if $k$ is odd} \\
q^{2n-k} & \textrm{if $k$ is even}.
\end{array} \right.
\]
\end{theorem}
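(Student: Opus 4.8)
The plan is to apply Lemma \ref{lem:chihersp} directly to $U$. Since $U$ is linear it is in particular an $\Fp$-subspace of $H_n(\Fqt)$, so $|U| = p^d$ for some $d$, and by property $(ii)$ the only element of rank less than $k$ is the zero matrix. Thus in the notation of Lemma \ref{lem:chihersp} we have $A_0 = 1$, $A_k = p^d - 1$, and $A_j = 0$ for every other $j$. Feeding this into the lemma, the only surviving terms are those for $j = 0$ and $j = k$, and after factoring out $p^{(2n-k)e - d}$ I would record that
\[
S := p^{(2n-k)e - d}\bigl(q^k + (-1)^k(p^d - 1)\bigr)
\]
is a non-negative integer. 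The two cases of the theorem then come from exploiting the two different pieces of information contained in this statement: non-negativity for $k$ odd, and integrality for $k$ even.

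For $k$ odd, $(-1)^k = -1$ and $S = p^{(2n-k)e-d}(q^k - p^d + 1)$. The prefactor is a positive power of $p$, so non-negativity of $S$ forces $q^k - p^d + 1 \ge 0$, i.e. $p^d \le q^k + 1$. Since $p^d$ is a power of $p$ and the smallest power of $p$ exceeding $q^k = p^{ke}$ is $p^{ke+1} = p\,q^k > q^k + 1$, this already gives $p^d \le q^k$, which is the desired bound $|U| \le q^k$.

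For $k$ even, $(-1)^k = 1$ and $S = p^{(2n-k)e-d}(q^k + p^d - 1)$ is manifestly positive, so non-negativity gives nothing; here the integrality of $S$ is what must be used, and this is the main obstacle. I would argue by contradiction: suppose $|U| = p^d > q^{2n-k}$, equivalently $d > (2n-k)e$, so that the exponent $(2n-k)e - d = -m$ with $m \ge 1$. Then $S = p^{-m}(q^k + p^d - 1)$, and since both $q^k = p^{ke}$ and $p^d$ are divisible by $p$ (note $d \ge 1$ and $ke \ge 1$), we have $q^k + p^d - 1 \equiv -1 \pmod p$. Hence $p$ does not divide $q^k + p^d - 1$, so $S$ has $p$-adic valuation $-m < 0$ and cannot be an integer, contradicting Lemma \ref{lem:chihersp}. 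Therefore $d \le (2n-k)e$, i.e. $|U| \le q^{2n-k}$. The essential point to get right is precisely this reduction modulo $p$ in the even case, where, unlike the odd case, the sign in Lemma \ref{lem:chihersp} makes the non-negativity constraint vacuous and one is forced to extract the bound from integrality instead.
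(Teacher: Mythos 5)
Your proposal is correct and follows essentially the same route as the paper: apply Lemma \ref{lem:chihersp} with $A_0=1$, $A_k=p^d-1$, use non-negativity of the resulting quantity in the odd case and its integrality (via the factor $q^k+p^d-1$ being coprime to $p$) in the even case. Your treatment of the odd case is in fact slightly more explicit than the paper's about why $p^d\le q^k+1$ forces $p^d\le q^k$, but the substance is identical.
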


\begin{proof}
Consider $U$ as an $\Fp$-subspace of $H_n(\Fqt)$, and let $A_i$ be as in Lemma \ref{lem:chihersp}. If $|U|=p^d$ for some positive integer $d$, we have that $A_0 = 1$, $A_k = p^d-1$, $A_i=0$ otherwise. Hence by Lemma \ref{lem:chihersp}, the number
\[
p^{2ne-d}  + (-1)^k (p^d-1) p^{(2n-k)e - d} = p^{(2n-k)e - d} (p^{ke}+(-1)^k (p^d-1))
\]
must be a non-negative integer. Hence if $k$ is odd, we must have $p^{ke}\geq p^d$, and hence $|U|\leq q^k$, as claimed. If $k$ is even, then $p^{ke}+p^d-1$ is a positive integer relatively prime to $p$, and so $p^{(2n-k)e - d}$ must be an integer, implying $d \leq (2n-k)e$, and $|U| \leq p^{(2n-k)e} = q^{2n-k}$ as claimed.
\end{proof}

We will see in the next section that when $k$ is odd, this result holds for general constant rank-distance sets.

In \cite[Theorems 2 and 3]{DuGoSh2011} , Theorem \ref{thm:boundher} was proved for the special case where $U$ is an $\Fq$-subspace. Hence this theorem is a generalisation of that result. It was also shown that this bound is met in all cases. In the Sections \ref{sect:mps} and \ref{sect:crs} we will consider non-linear constant rank-distance sets which exceed these bounds. In the next section we will prove new upper bounds for general constant rank-distance sets, using the theory of association schemes.

\section{Association schemes}\label{sec:assoc}

An \emph{association scheme} $A$ is a finite set $\Omega$ together with a set of symmetric relations $R = \{R_0,\ldots,R_d\}$ such that
\begin{enumerate}
\item
$R_0$ is the identity relation,
\item
$R$ is a partition of $\Omega \times \Omega$,
\item
there exist non-negative integers $p_{ij}^k$ such that for any $(x,y) \in R_k$, the number of elements $z$ such that $(x,z) \in R_i$ and $(y,z) \in R_j$ is $p_{ij}^k$.
\end{enumerate}

Association schemes were introduced in \cite{BoSh}. We refer to \cite[\textsection 2]{BrCoNe} for proofs and more information.

For each association scheme there is a \emph{character matrix} or \emph{matrix of eigenvalues} $P$. The \emph{dual matrix of eigenvalues} is the matrix $Q = |\Omega|P^{-1}$.

For any non-empty subset $U$ of $A$, the \emph{inner distribution} $\textbf{a} = (\textbf{a}_0,\textbf{a}_1,\ldots,\textbf{a}_d)$ is defined by
\[
\textbf{a}_i = \frac{|(U \times U)\cap R_i|}{|U|}.
\]
Delsarte \cite{Del74} proved that any inner distribution $\textbf{a}$ must be such that $\textbf{a}Q$ has non-negative entries:
\begin{equation}\label{nonnegaQ}
(\textbf{a}Q)_j\geq 0,j\in\{0,\ldots,d\}.
\end{equation}

The sets of bilinear forms, alternating forms, and hermitian matrices each form an association scheme. The relations are defined by $(x,y) \in R_i \Leftrightarrow \rank(x-y) = i$ in the case of bilinear forms and hermitian matrices, and $(x,y) \in R_i \Leftrightarrow \rank(x-y) = 2i$ in the case of alternating forms (see \cite[\textsection 9.5]{BrCoNe}).

We note that the schemes of hermitian matrices in $H_n(\Fqt)$ have been characterized for $n\geq 3$ by their parameters $p_{ij}^k$ (see \cite{IvanovShpectorov82},\cite{IvanovShpectorov91} and \cite{Terwilliger95}).

In the association scheme of hermitian matrices, the inner distribution of a non-empty subset $U$ of $H_n(\FF_{q^2})$ is given by
\[\textbf{a}_i = \frac{|\{(a,b) \mid a,b \in U,\rank(a-b)=i\}|}{|U|}.
\]

The character matrices for the association schemes of bilinear and alternating forms were calculated in \cite{Del78} and \cite{DelGo75}, respectively.

For a hermitian matrix $X \in H_n(\FF_{q^2})$ define a character on $H_n(\FF_{q^2})$ by
\[
P_X(Y) := \epsilon^{tr(Tr(\overline{X}^T Y))}.
\]
Here $Tr$ denotes matrix trace, $tr$ denotes absolute field trace, and again $\epsilon$ is a primitive $p$-th root of unity in $\CC$.

One easily sees that $\sum_{\{X:\rank(X) = i\}} \epsilon^{tr(Tr(\overline{X}^T Y))}=\sum_{\{X:\rank(X) = i\}}\epsilon^{tr(Tr(\overline{Y}^T X))}$ only depends on $\rank(Y)$.  We can now define
\[
P_i := \sum_{\{X:\rank(X) = i\}} P_X,
\]
and
\[
P_i(j) = P_i(Y)
\]
where $Y$ is any hermitian matrix of rank $j$.  It now follows from \cite[\textsection 2.10.B]{BrCoNe} that the matrices $P$ and $Q$ satisfy:
\[P_{ji}=Q_{ji}=P_i(j).\]

Note that this character $P_i$ is related to the character $\chi$ from the preceding section in the following way. Every rank one hermitian matrix $X$ can be written as $u\overline{u}^T$ for some $u \in W$, and there are precisely $(q+1)$ vectors $u$ such that $X=u\overline{u}^T$. Now
\begin{align*}
P_X (A) 	&= \epsilon^{tr(Tr(\overline{X}^T A))}\\
						&= \epsilon^{tr(Tr(u\overline{u}^T A))}\\
						&= \epsilon^{tr(Tr( \overline{u}^T A u))}\\
						&= \epsilon^{tr(\overline{u}^T A u)}\\
						&= \chi_{u}(A).
\end{align*}
Hence we have that
\begin{equation}\label{eqn:chiP}
\chi = (q+1) P_1 +  P_0.
\end{equation}
Note that $P_0(A)=1$ for all $A$, i.e. $P_0$ is the trivial character. 

\begin{lemma}\label{lem:upperboundconstantrankdistance}
If $U$ is a constant rank-distance $k>0$ set in $H_n(\FF_{q^2})$ and $P_i(k)<0$, then $|U|\leq 1-\frac{P_i(0)}{P_i(k)}$.
\end{lemma}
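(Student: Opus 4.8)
The plan is to apply Delsarte's linear-programming bound (inequality~\eqref{nonnegaQ}) to the inner distribution of $U$, using the single coordinate $j=i$. The key observation is that $U$ is a constant rank-distance $k$ set, so any two distinct elements are at rank-distance exactly $k$, while each element is at rank-distance $0$ from itself. This means the inner distribution $\textbf{a}$ is supported on only two coordinates: $\textbf{a}_0$ and $\textbf{a}_k$.

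First I would compute these two entries explicitly. Since $R_0$ is the identity relation, $|(U\times U)\cap R_0| = |U|$, giving $\textbf{a}_0 = 1$. For the rank-$k$ relation, every ordered pair of distinct elements of $U$ lies in $R_k$, so $|(U\times U)\cap R_k| = |U|(|U|-1)$, giving $\textbf{a}_k = |U|-1$. All other entries $\textbf{a}_i$ vanish.

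Next I would evaluate the $i$-th coordinate of $\textbf{a}Q$. Using the identity $Q_{ji}=P_i(j)$ established in the excerpt, the $(j,i)$ entry of $Q$ is $P_i(j)$, so
\[
(\textbf{a}Q)_i = \sum_{j} \textbf{a}_j Q_{ji} = \textbf{a}_0 P_i(0) + \textbf{a}_k P_i(k) = P_i(0) + (|U|-1)P_i(k).
\]
By Delsarte's inequality~\eqref{nonnegaQ}, this quantity is non-negative: $P_i(0) + (|U|-1)P_i(k) \geq 0$. I would then solve for $|U|$. Under the hypothesis $P_i(k)<0$, dividing by $P_i(k)$ reverses the inequality, yielding $|U|-1 \leq -P_i(0)/P_i(k)$, i.e. $|U| \leq 1 - P_i(0)/P_i(k)$, as claimed.

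The argument is essentially a direct substitution into a known inequality, so there is no serious technical obstacle; the only point requiring care is the \emph{bookkeeping} of the matrix convention. One must make sure that $(\textbf{a}Q)_i$ pairs $\textbf{a}_j$ with the correct entry $Q_{ji}=P_i(j)$ rather than its transpose, and that the sign-reversal when dividing by the negative quantity $P_i(k)$ is handled correctly. The hypothesis $P_i(k)<0$ is exactly what is needed to convert the linear inequality into an upper bound on $|U|$ rather than a lower bound, so the statement is precisely the right form in which to record the consequence of~\eqref{nonnegaQ} for this two-point inner distribution.
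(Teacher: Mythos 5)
Your proof is correct and follows essentially the same route as the paper: compute the two-point inner distribution $\textbf{a}_0=1$, $\textbf{a}_k=|U|-1$, evaluate $(\textbf{a}Q)_i=P_i(0)+(|U|-1)P_i(k)$ via $Q_{ji}=P_i(j)$, and apply Delsarte's non-negativity condition together with the sign reversal from dividing by $P_i(k)<0$. The only cosmetic difference is that the paper explicitly records the (trivial) reduction to $U\neq\emptyset$, which your computation of $\textbf{a}_0=1$ implicitly assumes.
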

\begin{proof}
We may assume that $U\neq\emptyset$. The assumptions on $U$ now imply that its inner distribution $\textbf{a}$ is given by:
\[
\textbf{a}_j = \left\{
\begin{array}{cc}
1 & \textrm{if $j=0$}\\
|U|-1 &  \textrm{if $j=k$}\\
0 & \textrm{otherwise}
\end{array}\right.
\]

and hence
\begin{align*}
(\textbf{a}Q)_i =(\textbf{a}P)_i &= \sum_j \textbf{a}_j P_i(j)\\
		&= P_i(0) + (|U|-1)P_i(k).
\end{align*}
If $P_i(k) < 0$ then by (\ref{nonnegaQ}):
\begin{equation}\label{eqn:genbound}
|U| \leq 1- \frac{P_i(0)}{P_i(k)}.
\end{equation}
\end{proof}

The value $P_j(0)$ equals the number of hermitian matrices of rank $j$ in $H_n(\Fqt)$, which is well known (see for instance \cite[p.127 and Theorem 9.5.7]{BrCoNe}) and given by
\begin{equation}\label{eqn:valencies}
P_j(0)={n \brack j}_{q^2} q^{\frac{j(j-1)}{2}} \prod_{i=1}^j (q^i+(-1)^i).
\end{equation}
\begin{remark}\label{rem:switch}
Note that since 
\begin{equation}\label{eqn:ratios}
\frac{P_i(k)}{P_i(0)}=\frac{P_k(i)}{P_k(0)}
\end{equation}
 (see for instance \cite[Lemma 2.2.1(iv)]{BrCoNe}) we also have, under the assumptions of Lemma \ref{lem:upperboundconstantrankdistance}, that
\begin{equation}\label{eqn:genbound2}
|U| \leq 1- \frac{P_k(0)}{P_k(i)}.
\end{equation}
So for every negative character value $P_i(j)$, we obtain an upper bound for both a constant rank-distance $i$ set and a constant rank-distance $j$ set.
\end{remark}

The values $P_i(j)$ were given in terms of Krawtchouk polynomials in \cite{Stanton}.

The values $P_1(j)$ follow from (\ref{eqn:chiP}) and Lemma \ref{lem:chiher}:
\begin{equation}\label{eqn:P1}
P_1(j) = \frac{(-q)^{2n-j}-1}{q+1}.
\end{equation}

Consequently, we have the following.
\begin{theorem}\label{thm:oddgen}
Let $U$ be any constant rank-distance $k$ set of $H_n(\FF_{q^2})$, $k$ odd. Then
\[
|U| \leq q^k.
\]
\end{theorem}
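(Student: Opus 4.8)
The plan is to apply Lemma~\ref{lem:upperboundconstantrankdistance} with the specific choice $i=1$, since the character values $P_1(j)$ have already been computed explicitly in equation~(\ref{eqn:P1}). Lemma~\ref{lem:upperboundconstantrankdistance} gives, whenever $P_1(k)<0$, the bound $|U|\leq 1-\frac{P_1(0)}{P_1(k)}$, so the whole argument reduces to (a) verifying that $P_1(k)$ is indeed negative when $k$ is odd, and (b) substituting the formula from~(\ref{eqn:P1}) and simplifying the resulting rational expression to obtain exactly $q^k$.

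First I would check the sign. From~(\ref{eqn:P1}) we have $P_1(j) = \frac{(-q)^{2n-j}-1}{q+1}$. When $k$ is odd, $2n-k$ is odd, so $(-q)^{2n-k} = -q^{2n-k}$, giving $P_1(k) = \frac{-q^{2n-k}-1}{q+1} < 0$. This confirms the hypothesis of Lemma~\ref{lem:upperboundconstantrankdistance} is satisfied, so the bound applies. For the base point I compute $P_1(0) = \frac{(-q)^{2n}-1}{q+1} = \frac{q^{2n}-1}{q+1}$, since $2n$ is even.

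Next I would carry out the substitution. We have
\[
1 - \frac{P_1(0)}{P_1(k)} = 1 - \frac{q^{2n}-1}{-q^{2n-k}-1} = 1 + \frac{q^{2n}-1}{q^{2n-k}+1}.
\]
Writing this over a common denominator yields
\[
\frac{q^{2n-k}+1 + q^{2n}-1}{q^{2n-k}+1} = \frac{q^{2n-k}+q^{2n}}{q^{2n-k}+1} = \frac{q^{2n-k}(1+q^{k})}{q^{2n-k}+1}.
\]
The remaining task is to recognize that this equals $q^k$, i.e. to verify $q^{2n-k}(1+q^k) = q^k(q^{2n-k}+1)$, which is immediate since both sides equal $q^{2n-k}+q^{2n}$. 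Thus $|U|\leq q^k$.

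The argument is essentially a direct specialization of the general bound, so there is no serious obstacle; the only point requiring care is the parity bookkeeping in the exponents of $(-q)$. The step most likely to trip up a reader (and the one I would present most carefully) is confirming that $2n-k$ is odd precisely when $k$ is odd, so that the sign of $P_1(k)$ comes out negative and the final algebraic simplification collapses cleanly to $q^k$. This also matches, for general constant rank-distance sets, the bound Theorem~\ref{thm:boundher} gave in the odd case for linear sets, as promised in the remark following that theorem.
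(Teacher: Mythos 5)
Your overall strategy is exactly the paper's: apply Lemma~\ref{lem:upperboundconstantrankdistance} with $i=1$, check $P_1(k)<0$ for odd $k$ using~(\ref{eqn:P1}), and evaluate $1-\frac{P_1(0)}{P_1(k)}=1+\frac{q^{2n}-1}{q^{2n-k}+1}$. The sign check and the reduction to $\frac{q^{2n-k}(1+q^k)}{q^{2n-k}+1}$ are both correct.

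However, your final step contains a genuine algebra error. You claim $q^{2n-k}(1+q^k)=q^k(q^{2n-k}+1)$ "since both sides equal $q^{2n-k}+q^{2n}$," but the right-hand side expands to $q^{2n}+q^{k}$, not $q^{2n}+q^{2n-k}$. These agree only when $k=n$. For $k<n$ the bound from the lemma is \emph{not} equal to $q^k$; since $2n-k>k$ one has
\[
q^k \;<\; \frac{q^{2n-k}(q^k+1)}{q^{2n-k}+1} \;=\;(q^k+1)\left(1-\frac{1}{q^{2n-k}+1}\right)\;<\;q^k+1,
\]
so the linear-programming bound is a non-integer strictly between $q^k$ and $q^k+1$. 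The missing ingredient is integrality: because $|U|$ is an integer and the bound is strictly less than $q^k+1$, one concludes $|U|\leq q^k$. This is precisely how the paper closes the argument, and your proof is complete once this last step is replaced by the inequality-plus-integrality observation rather than a (false) exact identity.
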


\begin{proof}
Applying Lemma \ref{lem:upperboundconstantrankdistance}, (\ref{eqn:valencies}) and (\ref{eqn:P1}), we obtain
\[
|U| \leq 1-\frac{P_1(0)}{P_1(k)}=1+\frac{q^{2n}-1}{q^{2n-k}+1} = (q^k+1)\left(1-\frac{1}{q^{2n-k}+1}\right) < q^k+1,
\]
and thus $|U|\leq q^k$, as claimed.
\end{proof}

Note that, in light of Lemma \ref{lem:gendimcorrespondence}, a slightly weaker upper bound of $q^k+1$ essentially follows from \cite[Lemma 3.2]{Van10}. 

We now calculate some further values, which will give new bounds in the even rank case.

\begin{lemma}\label{lem:formulaspecialeig}
For any $k$, we have
\[
P_k(n) = \prod_{i=1}^k \frac{(-q)^{i-1}-(-q)^n}{(-q)^i - 1}.
\]
\end{lemma}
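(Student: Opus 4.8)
The plan is to compute the special eigenvalue $P_k(n)$, which is the value of the character $P_k$ evaluated at a hermitian matrix of full rank $n$. The strategy is to exploit the duality relation already available in the excerpt, namely equation~(\ref{eqn:ratios}), which tells us that
\[
\frac{P_k(n)}{P_k(0)} = \frac{P_n(k)}{P_n(0)}.
\]
Since $P_k(0)$ is given explicitly by the valency formula~(\ref{eqn:valencies}), the real work reduces to understanding $P_n(j)$, the character associated to \emph{full-rank} hermitian matrices, evaluated at rank $j$. The top relation $R_n$ in the scheme is the one singling out pairs at maximal rank-distance, so $P_n$ should be the most tractable of all the characters to handle directly by a counting or generating-function argument.

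First I would write down $P_n(j) = \sum_{\{X:\rank(X)=n\}} \epsilon^{tr(Tr(\overline{X}^T Y))}$ for a fixed $Y$ of rank $j$, and relate it to the already-computed quantity $\chi$. A cleaner route, though, is to use the fact that the full sum over \emph{all} hermitian matrices of the character $P_X$ vanishes unless $Y=0$ (orthogonality of characters on the additive group), so $\sum_{i=0}^n P_i(j) = 0$ for $j>0$ and equals $|H_n(\Fqt)| = q^{n^2}$ for $j=0$. Combined with the $P_1(j)$ formula~(\ref{eqn:P1}) and an inductive handle on the lower $P_i$, one can isolate $P_n(j)$. However, I expect the most efficient path is to recognize $P_k(n)$ directly as a product over the ``diagonal'' of the eigenvalue matrix: the factors $\frac{(-q)^{i-1}-(-q)^n}{(-q)^i-1}$ strongly suggest a telescoping $q$-analogue identity, where each factor corresponds to peeling off one rank level. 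I would therefore try to set up a recurrence in $k$,
\[
\frac{P_k(n)}{P_{k-1}(n)} = \frac{(-q)^{k-1}-(-q)^n}{(-q)^k-1},
\]
and verify it against the known base cases $P_0(n)=1$ and $P_1(n) = \frac{(-q)^{n}-1}{q+1}$ from~(\ref{eqn:P1}), the latter matching the $k=1$ product.

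To establish such a recurrence I would invoke the explicit description of the hermitian scheme eigenvalues in terms of $q$-Krawtchouk (or Eberlein-type) polynomials cited via~\cite{Stanton}, where three-term recurrences among the $P_i(j)$ are standard; alternatively the product formula may be read off from the known evaluation of these $q$-hypergeometric polynomials at the endpoint $j=n$, at which most terms of the hypergeometric sum collapse. The evaluation at an extreme argument is exactly the situation where such sums telescope into a single product, so I would push the computation through at $j=n$ rather than attempting the general $P_k(j)$.

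The main obstacle I anticipate is not conceptual but bookkeeping: correctly tracking the signs $(-q)$ versus $q$ that pervade the hermitian scheme (these signs are precisely what distinguishes it from the bilinear-forms scheme and what produced the $(-1)^k$ in Lemma~\ref{lem:chiher}), and ensuring the product's index conventions match the $q$-binomial normalization in~(\ref{eqn:valencies}). The cleanest way to control this is to verify the claimed formula by induction on $k$ via the three-term recurrence, checking the base case against~(\ref{eqn:P1}) and confirming the single ratio identity above, rather than manipulating the full Krawtchouk sum; that reduces the risk of sign errors to a single, checkable algebraic step.
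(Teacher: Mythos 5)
Your plan coincides with the paper's own proof: the authors also proceed by induction on $k$, checking the base cases $P_0(n)=1$ and $P_1(n)=\frac{(-q)^n-1}{q+1}$ against~(\ref{eqn:P1}) and then using the standard three-term recurrence $P_1(j)P_k(j)=c_{k+1}P_{k+1}(j)+a_kP_k(j)+b_{k-1}P_{k-1}(j)$ (with the explicit intersection-number coefficients from \cite[p.128]{BrCoNe}) evaluated at $j=n$ to derive exactly the ratio $P_{k+1}(n)/P_k(n)=\frac{(-q)^{k}-(-q)^n}{(-q)^{k+1}-1}$ you propose. The only content you leave implicit --- the algebraic simplification of the recurrence at $j=n$ into that single ratio --- is likewise left as ``some manipulation'' in the paper, so your approach is essentially identical.
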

\begin{proof}
For $k=0$, this is clear, and for $k=1$, this follows from (\ref{eqn:P1}). Suppose now that the formula holds for $P_0(n),\cdots,P_{k}(n)$, with $1\leq k\leq n-1$. The following identity then holds (see for instance \cite[p.128]{BrCoNe}):
\[
P_1(j)P_k(j)= c_{k+1}P_{k+1}(j) + a_kP_k(j)+b_{k-1}P_{k-1}(j),
\]
where
\begin{align*}
b_i &= \frac{q^{2 n}-q^{2 i}}{q+1}\\
c_i &= (-q)^{i-1}\frac{(-q)^i-1}{(-q)-1}\\
a_i &= b_0-b_i-c_i,
\end{align*}
for all $i\in\{0,\ldots,n\}$.

Hence:
\[ P_{k+1}(n)=\frac{1}{c_{k+1}} \left(\frac{1-(-q)^n}{-q-1}-a_k -b_{k-1}\frac{(-q)^k-1}{(-q)^{k-1}-(-q)^n}\right)P_k(n),\]
which, after some manipulation, can be simplified to
\[P_{k+1}(n)=\frac{(-q)^k-(-q)^n}{(-q)^{k+1}-1}P_k(n),\]
proving the claim.
\end{proof}

This leads immediately to the following.

\begin{theorem}
Let $U$ be any constant rank-distance $k$ set of $H_n(\FF_{q^2})$, $k \equiv 2 \mod 4$. Then
\[
|U| \leq 1+\prod_{i=1}^k (q^{n-i+1}+(-1)^{n-i+1}).
\]
\end{theorem}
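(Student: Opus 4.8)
\textbf{The plan} is to apply Lemma~\ref{lem:upperboundconstantrankdistance} with the right choice of index $i$, for which I need a character value $P_i(k)$ that is negative and whose ratio with $P_i(0)$ yields exactly the claimed product. The most natural candidate, in view of Remark~\ref{rem:switch} and the work already done, is to exploit equation~(\ref{eqn:genbound2}): for the constant rank-distance $k$ set $U$, choosing $i=n$ gives the bound $|U| \leq 1 - P_k(0)/P_k(n)$ provided $P_k(n)<0$. Since Lemma~\ref{lem:formulaspecialeig} furnishes a closed product form for $P_k(n)$, and equation~(\ref{eqn:valencies}) gives $P_k(0)$ explicitly, the entire argument should reduce to an algebraic simplification of the ratio $P_k(0)/P_k(n)$.

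First I would write out both quantities. From Lemma~\ref{lem:formulaspecialeig},
\[
P_k(n) = \prod_{i=1}^k \frac{(-q)^{i-1}-(-q)^n}{(-q)^i-1},
\]
and from~(\ref{eqn:valencies}),
\[
P_k(0) = {n \brack k}_{q^2}\, q^{\frac{k(k-1)}{2}} \prod_{i=1}^k (q^i+(-1)^i).
\]
I would then form $-P_k(0)/P_k(n)$ and simplify term by term. The Gaussian binomial ${n \brack k}_{q^2}$ expands as a ratio of products of $(q^{2j}-1)$ factors, and I expect these to cancel against the denominators $(-q)^i-1 = (-1)^i(q^i - (-1)^i)$ appearing in $P_k(n)$ after the factors $(q^i+(-1)^i)$ from $P_k(0)$ are paired with them via $q^{2i}-1=(q^i+(-1)^i)(q^i-(-1)^i)$. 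The numerators $(-q)^{i-1}-(-q)^n$ of $P_k(n)$ should, after factoring out signs and a power of $q$, collapse into the product $\prod_{i=1}^k (q^{n-i+1}+(-1)^{n-i+1})$ that appears in the statement.

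\textbf{The main obstacle} I anticipate is bookkeeping of the signs introduced by the base $-q$ throughout Lemma~\ref{lem:formulaspecialeig}, together with verifying that $P_k(n)$ is genuinely negative under the hypothesis $k\equiv 2\bmod 4$ (so that Lemma~\ref{lem:upperboundconstantrankdistance}, via Remark~\ref{rem:switch}, applies at all). The factor $(-q)^{i-1}-(-q)^n$ has a sign depending on the parities of $i-1$ and $n$, and the denominator $(-q)^i-1$ has a sign depending on the parity of $i$; tracking the net sign of the full product $P_k(n)$ as a function of $k \bmod 4$ and of $n$ is the delicate point. The congruence $k\equiv 2\bmod 4$ should be precisely what forces an odd number of sign reversals so that $P_k(n)<0$, and hence $1-P_k(0)/P_k(n)$ becomes $1$ plus the positive product claimed. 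Once the sign is pinned down and the power-of-$q$ and binomial cancellations are carried out carefully, the simplification to $1+\prod_{i=1}^k (q^{n-i+1}+(-1)^{n-i+1})$ should be routine.
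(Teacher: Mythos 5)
Your proposal is correct and follows exactly the paper's own route: apply the bound (\ref{eqn:genbound2}) with $i=n$, compute $P_k(0)$ from (\ref{eqn:valencies}) and $P_k(n)$ from Lemma \ref{lem:formulaspecialeig}, and check that the ratio $P_k(0)/P_k(n)$ simplifies to $(-1)^{nk+\binom{k}{2}}\prod_{i=1}^k (q^{n-i+1}+(-1)^{n-i+1})$, which is negative precisely because $\binom{k}{2}$ is odd when $k\equiv 2 \bmod 4$. The sign bookkeeping you flag as the delicate point is indeed the only real work, and it resolves exactly as you predict.
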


\begin{proof}
By (\ref{eqn:genbound2}), if $P_k(n)$ is negative then
\[
|U| \leq 1-\frac{P_k(0)}{P_k(n)}.
\]
Now by (\ref{eqn:valencies})
\[
P_k(0) = q^{{k \choose 2}}\prod_{i=1}^k \frac{q^{2(n-i+1)} - 1}{q^{i} - (-1)^i},
\]
and by Lemma \ref{lem:formulaspecialeig}
\[
P_k(n) = (-q)^{{k \choose 2}}(-1)^{nk} \prod_{i=1}^k \frac{q^{n-i+1} - (-1)^{n-i+1}}{q^i-(-1)^i}.
\]
Hence
\[
\frac{P_k(0)}{P_k(n)} = (-1)^{nk+{k \choose 2}}\prod_{i=1}^k (q^{n-i+1}+(-1)^{n-i+1}),
\]

which is negative if $k \equiv 2 \mod 4$, completing the claim.
\end{proof}

\begin{remark}\label{rem:thasnew}
When $k=2$, the above gives an upper bound on a constant rank-distance $2$ set, that is
\begin{equation}\label{eqn:thasnew}
|U| \leq  q^{2n-1}+(-1)^{n-1}(q^n-q^{n-1}).
\end{equation}

By Remark \ref{rem:switch}, this is also an upper bound for a constant rank-distance $n$ set, that is, a partial spread set. In fact, this is exactly the bound of Thas \cite[Theorem 21]{Thas92} on partial spreads for even $n$ (translated to a bound on partial spread sets via Lemma \ref{lem:partialspreadcorrespondence}). We will discuss partial spread sets further in Section \ref{sect:mps}.
\end{remark}

The following theorem shows that the upper bound for constant rank-distance $2$ sets, is in fact an upper bound on sets under somewhat weaker assumptions as well.

\begin{theorem}Let $U$ be a subset of $H_n(\Fqt)$ for odd $n\geq 3$, such that for any $x,y\in U: \rank(x-y)\leq 2$.  Then $|U|\leq q^{2 n -1}+q^n-q^{n-1}$, and if equality holds, then $\{A-B:B \in U\}$ is a constant rank-distance $2$ set for all $A\in U$.
\end{theorem}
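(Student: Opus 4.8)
The plan is to run Delsarte's linear programming bound in the association scheme of hermitian matrices, but with an inner distribution supported on three coordinates rather than two. Since every pair of distinct elements of $U$ has rank-distance $1$ or $2$, the inner distribution $\textbf{a}$ of $U$ satisfies $\textbf{a}_0 = 1$, $\textbf{a}_j = 0$ for $j \geq 3$, and $\textbf{a}_1 + \textbf{a}_2 = |U|-1$ with $\textbf{a}_1, \textbf{a}_2 \geq 0$. The key observation is that the bound of Remark \ref{rem:thasnew} arises from evaluating the inequality (\ref{nonnegaQ}) at coordinate $n$, and that at this same coordinate \emph{both} eigenvalues $P_n(1)$ and $P_n(2)$ are negative when $n$ is odd, with $|P_n(2)|$ the smaller of the two in absolute value.

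Concretely, I would first extract the relevant eigenvalues through the switching relation (\ref{eqn:ratios}), which expresses $P_n(1)/P_n(0)$ and $P_n(2)/P_n(0)$ in terms of the already-computed $P_1(n)$ (via (\ref{eqn:P1})) and $P_2(n)$ (via Lemma \ref{lem:formulaspecialeig}), together with the valencies (\ref{eqn:valencies}). For odd $n$ these simplify to $P_n(1) = -P_n(0)/(q^n-1)$ and $P_n(2) = -P_n(0)/\bigl((q^n-1)(q^{n-1}+1)\bigr)$, both strictly negative while $P_n(0)>0$. The bound then follows from the single inequality $(\textbf{a}Q)_n = P_n(0) + \textbf{a}_1 P_n(1) + \textbf{a}_2 P_n(2) \geq 0$. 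Rewriting this as $\textbf{a}_1 |P_n(1)| + \textbf{a}_2 |P_n(2)| \leq P_n(0)$ and using $|P_n(1)| > |P_n(2)| > 0$, I would bound $(\textbf{a}_1+\textbf{a}_2)|P_n(2)| \leq P_n(0)$, giving $\textbf{a}_1 + \textbf{a}_2 \leq P_n(0)/|P_n(2)| = (q^n-1)(q^{n-1}+1) = q^{2n-1}+q^n-q^{n-1}-1$, whence $|U| = 1 + \textbf{a}_1 + \textbf{a}_2 \leq q^{2n-1}+q^n-q^{n-1}$. The conceptual point is that rank-$1$ pairs carry the heavier eigenvalue, so the extremal feasible configuration is the pure rank-$2$ one, which is precisely why the relaxed hypothesis yields the same bound as for a genuine constant rank-distance $2$ set.

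For the equality statement I would track when the chain of inequalities is tight. The step $(\textbf{a}_1+\textbf{a}_2)|P_n(2)| \leq \textbf{a}_1 |P_n(1)| + \textbf{a}_2 |P_n(2)|$ is an equality only if $\textbf{a}_1\bigl(|P_n(1)|-|P_n(2)|\bigr)=0$, and since $|P_n(1)| > |P_n(2)|$ this forces $\textbf{a}_1 = 0$. Hence at equality no two distinct elements of $U$ differ by a rank-$1$ matrix, so every distinct pair has rank-distance exactly $2$. Fixing $A \in U$, for distinct $B,B' \in U$ one then has $(A-B)-(A-B') = B'-B$ of rank $2$, and $A-B$ of rank $2$ for $B \neq A$, while $A-A=0$; thus $\{A-B : B \in U\}$ satisfies both defining conditions of a constant rank-distance $2$ set.

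The main obstacle is the eigenvalue sign analysis: the entire argument hinges on \emph{both} $P_n(1)$ and $P_n(2)$ being negative, which is exactly where the parity hypothesis enters, since for even $n$ one has $P_n(1)>0$ and the method collapses. Establishing the precise magnitudes and, crucially, the inequality $|P_n(1)| > |P_n(2)|$ — so that it is the rank-$2$ eigenvalue that governs the extremal bound — is the heart of the matter; the remaining steps are routine factorizations of the valency and eigenvalue expressions.
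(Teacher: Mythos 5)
Your proposal is correct and takes essentially the same route as the paper: both apply the single Delsarte constraint $(\textbf{a}Q)_n \geq 0$ to the inner distribution supported on ranks $0,1,2$, compute $P_n(1)/P_n(0)$ and $P_n(2)/P_n(0)$ via the ratio identity together with $P_1(n)$ and Lemma \ref{lem:formulaspecialeig}, and observe that for odd $n$ the rank-$1$ eigenvalue is the more negative one, so that equality forces $\textbf{a}_1=0$. The only difference is cosmetic (you compare magnitudes $|P_n(1)|>|P_n(2)|$ where the paper substitutes $\textbf{a}_2=|U|-1-\textbf{a}_1$ and reads off the term $-\textbf{a}_1 q^{n-1}$), and your stated eigenvalue ratios check out.
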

\begin{proof}Suppose $U$ is non-empty, and let $\textbf{a}$ be the inner distribution of $U$. By our assumption, $\textbf{a}_i=0$ if $i>2$.  Now by (\ref{nonnegaQ}) we have:
\[\textbf{a}_0 Q_{0 n}+\textbf{a}_1 Q_{1 n}+\textbf{a}_2 Q_{2 n}\geq 0,\]
or equivalently, since $P=Q$:
\[ 1 + \frac{P_n(1)}{P_n(0)} \textbf{a}_1+\frac{P_n(2)}{P_n(0)}\textbf{a}_2\geq 0,\]
which by (\ref{eqn:ratios}) can be rewritten  as
\[ 1 + \frac{P_1(n)}{P_1(0)} \textbf{a}_1+\frac{P_2(n)}{P_2(0)}\textbf{a}_2\geq 0.\]

It now follows from (\ref{eqn:valencies}) and Lemma \ref{lem:formulaspecialeig} that:
\[1+\frac{\textbf{a}_1}{(-q)^n+1}+\frac{\textbf{a}_2}{((-q)^{n-1}+1)((-q)^n+1)}\geq 0.\]
Since $n$ is odd and $\textbf{a}_2=|U|-\textbf{a}_1-1$, this yields:
\[|U|\leq q^{2 n -1}+q^n-q^{n-1}-\textbf{a}_1 q^{n-1}.\]
Since $\textbf{a}_1\geq 0$, this yields the desired upper bound on $|U|$, where equality implies that $\textbf{a}_1=0$, i.e. that no two elements of $U$ differ by a matrix of rank one. Hence, as discussed in the introduction, $\{A-B:B \in U\}$ is a constant rank-distance $2$ set for all $A\in U$.

\end{proof}

\section{Maximal partial spreads of $H(2n-1,q^2)$}
\label{sect:mps}

In this section we apply Theorem \ref{thm:boundher} to prove new results on the maximality of some partial spreads of $H(2n-1,q^2)$, and construct new maximal partial spreads in $H(3,q^2)$.

Thas \cite{Thas92} showed that spreads do not exist in $H(2n-1,q^2)$, and proved the upper bound for a partial spread set for $n$ even of $q^{2n-1}-q^n+q^{n-1}$. In \cite[Theorem 4.2]{DeBKlMetSt08}, this was improved to $q^{2 n-1}-q^{(3 n +1)/2}+q^{3n/2}-1$ for even $n\geq 4$, and $(q^3+q)/2$ for $n=2$. Much study has been dedicated to the spectrum of sizes of maximal partial spreads, see for example \cite{DeBKlMet12}.

As noted in Section \ref{sect:pre}, spreads in $W(2n-1,q)$ lead to partial spreads in $H(2n-1,q^2)$ of the same size. Such a spread always exists, and has order $q^n+1$. If $n$ is \emph{odd}, this is in fact the largest possible size of a partial spread in $H(2n-1,q^2)$:

\begin{theorem}\cite{Van09}
A partial spread in $H(2n-1,q^2)$, $n$ \emph{odd}, has size at most $q^n+1$.
\end{theorem}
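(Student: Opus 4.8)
The plan is to deduce the bound for odd $n$ from the machinery already assembled, specifically Theorem \ref{thm:oddgen} together with the partial spread correspondence of Lemma \ref{lem:partialspreadcorrespondence}. A partial spread in $H(2n-1,q^2)$ of size $N+1$ corresponds, via Lemma \ref{lem:partialspreadcorrespondence}, to a partial spread set $U$ in $H_n(\Fqt)$ of size $N$; and a partial spread set is exactly a constant rank-distance $k$ set with $k=n$. So the whole question reduces to bounding the size of a constant rank-distance $n$ set when $n$ is odd.

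For odd $n$, the rank $k=n$ is itself odd, so Theorem \ref{thm:oddgen} applies directly and gives $|U| \leq q^n$. Translating back through Lemma \ref{lem:partialspreadcorrespondence}, the corresponding partial spread in $H(2n-1,q^2)$ has size $|U|+1 \leq q^n+1$. This is the claimed bound, and it is attained by the extension of a (semifield) spread of $W(2n-1,q)$, as discussed in Section \ref{sect:pre}. Thus the entire argument is essentially a two-line specialisation: \emph{set $k=n$ in Theorem \ref{thm:oddgen}, then add one}.

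The main thing to verify carefully is that Theorem \ref{thm:oddgen} genuinely covers the non-linear case, since a partial spread of $H(2n-1,q^2)$ need not arise from an $\Fp$-linear partial spread set — the correspondence in Lemma \ref{lem:partialspreadcorrespondence} produces only a set closed under subtraction of a fixed element, not a subgroup. This is exactly why one wants the association-scheme bound of Theorem \ref{thm:oddgen} rather than the character-theoretic Theorem \ref{thm:boundher}, which assumed linearity. Fortunately Theorem \ref{thm:oddgen} was stated for \emph{any} constant rank-distance $k$ set with $k$ odd, so no linearity hypothesis is needed and the deduction is clean.

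The only genuine obstacle, and it is a small one, is bookkeeping at the boundary: one must confirm that the inner-distribution argument underlying Lemma \ref{lem:upperboundconstantrankdistance} is valid when $k=n$, i.e. that $P_1(n)<0$ so that the bound $|U|\leq 1-P_1(0)/P_1(n)$ is meaningful. From \eqref{eqn:P1} we have $P_1(n) = \frac{(-q)^{n}-1}{q+1}$, which for odd $n$ equals $\frac{-q^n-1}{q+1}<0$, so the hypothesis of Lemma \ref{lem:upperboundconstantrankdistance} is satisfied and the strict inequality $|U|<q^n+1$ in the proof of Theorem \ref{thm:oddgen} forces $|U|\leq q^n$. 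Hence the theorem for odd $n$ follows immediately, and I would present it as a one-paragraph corollary of the odd-rank bound already proved.
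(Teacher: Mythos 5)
The paper does not prove this statement at all: it is quoted from \cite{Van09}, where it was established by graph-theoretic (eigenvalue) techniques on the dual polar graph, with a later geometric proof in \cite{Van11}. Your derivation is therefore necessarily a different route, and it is a correct and non-circular one: Theorem \ref{thm:oddgen} is proved in Section \ref{sec:assoc} purely from the Delsarte inequality for the association scheme of hermitian matrices and the eigenvalue formula \eqref{eqn:P1}, with no appeal to the cited result, so specialising to $k=n$ odd and passing through Lemma \ref{lem:partialspreadcorrespondence} legitimately recovers the bound $q^n+1$. You are also right to flag the two delicate points: the absence of any linearity hypothesis in Theorem \ref{thm:oddgen} (which is what makes the deduction work for arbitrary partial spreads, after normalising so that the spread contains $S_0$ and $S_\infty$, as justified in Section \ref{sect:pre} for partial spreads of size at least two), and the sign of $P_1(n)$ for odd $n$. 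One small simplification: for $k=n$ the quantity $1-P_1(0)/P_1(n)=1+\frac{q^{2n}-1}{q^n+1}$ equals $q^n$ exactly, so no integrality rounding is needed at the boundary. The comparison of approaches: the \cite{Van09} argument works directly with the disjointness graph of generators and needs no coordinatisation, whereas your route requires the equivalence with partial spread sets but comes essentially for free once the hermitian-matrix scheme machinery of Section \ref{sec:assoc} is in place; the paper itself gestures at this by noting that a slightly weaker bound of $q^k+1$ follows from \cite[Lemma 3.2]{Van10}, and by using the cited theorem in the reverse direction to bound partial spread sets.
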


This was proved by the fourth author using graph-theoretical techniques in \cite{Van09}, and again geometrically in \cite{Van11}. Note that this implies that a partial spread set in $H_n(\Fqt)$ has size at most $q^n$ when $n$ is odd, by Lemma \ref{lem:partialspreadcorrespondence}. We will see in Remark \ref{rem:spgen} that when $n$ is even, partial spreads of size larger than $q^n+1$ always exist. Theorem \ref{thm:boundher} implies that a {\it linear} partial spread set in $H_n(\Fqt)$ has size at most $q^n$ for any $n$.

Aguglia, Cossidente and Ebert \cite{AgCosEb01} proved the following (although their terminology is different).

\begin{theorem}[Aguglia-Cossidente-Ebert]
Any extension of a spread in $W(3,q)$ to a partial spread in $H(3,q^2)$ is maximal.
\end{theorem}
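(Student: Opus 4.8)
The plan is to show that the extension $S'$ of $S$ cannot be enlarged; that is, that no generator of $H(3,q^2)$ is disjoint from every line of $S'$ (I will call such a generator an \emph{extending generator}, and the content below is exactly that two extending generators are forced to meet, so that none can exist). Write $\Sigma_0=\PG(3,q)$ for the Baer subgeometry with $H(3,q^2)\cap\Sigma_0=W(3,q)$, and let $\sigma$ be the associated Baer involution (the map $x\mapsto\overline{x}$ applied to coordinates), so that $\Sigma_0$ is precisely its fixed locus. Since every line of $S'$ is the extension of a line of $S$, the set $S'$ is $\sigma$-invariant, and because $S$ is a \emph{spread} of $W(3,q)$ every point of $W(3,q)$ is covered by $S'$. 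A first reduction is that an extending generator $m$ must be skew to $\Sigma_0$: the $q+1$ generators of $H(3,q^2)$ through a point of $W(3,q)$ are exactly the extensions of the $q+1$ symplectic lines through it, so a generator meeting $\Sigma_0$ is real, and a real extending generator would restrict to a symplectic line disjoint from the spread $S$, which is impossible.

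Granting this, the heart of the argument is to prove that \emph{every generator $m$ of $H(3,q^2)$ skew to $\Sigma_0$ meets a line of $S'$}. Here the conjugate $m^{\sigma}$ enters decisively: it is again a generator, it is distinct from $m$ (as $m$ is skew to $\Sigma_0$), and $m\cap m^{\sigma}=\emptyset$, since a common point would be $\sigma$-fixed and hence lie in $\Sigma_0$. The transversal regulus $\mathcal R$ of the pair $\{m,m^{\sigma}\}$ is permuted by $\sigma$, so $\sigma$ fixes the hyperbolic quadric $Q^{+}(3,q^2)$ carrying $m$ and $m^{\sigma}$ and acts as a Baer involution on each of its two reguli. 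Consequently $Q^{+}(3,q^2)\cap\Sigma_0$ is a hyperbolic quadric $Q_0=Q^{+}(3,q)$ of $\Sigma_0$ \emph{both of whose reguli}, say $\{\mu_i\}$ and $\{\nu_j\}$, consist of totally isotropic (symplectic) lines; the $\mu_i$ are the $\Sigma_0$-parts of the $q+1$ real transversals $r_i=\langle P_i,P_i^{\sigma}\rangle$ with $P_i\in m$. Thus $m$ meets $S'$ exactly when one of the real transversals $r_i$ lies in $S'$, i.e.\ when the spread $S$ contains one of the symplectic lines $\mu_i$.

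The remaining, and main, obstacle is therefore to show that the spread $S$ must contain a line of the distinguished symplectic regulus $\{\mu_i\}$, rather than merely crossing each $\mu_i$ in a single point. The plan is to exploit the unusual rigidity that \emph{both} reguli of $Q_0$ are totally isotropic, together with the partition property of a spread: counting the spread lines through the $(q+1)^2$ points of $Q_0$ and analysing how a symplectic line can meet $Q_0$ should force at least one full regulus line into $S$. Once this is in hand, $r_{i_0}\in S'$ passes through $P_{i_0}\in m$, so $m$ meets $S'$, contradicting that $m$ is extending; hence no extending generator exists and $S'$ is maximal. I also expect a cleaner route in the linear (symplectic semifield) case that bypasses the regulus analysis and ties in with Theorem \ref{thm:boundher}: there $S'$ corresponds to a $2$-dimensional $\Fq$-subspace $U_S\subseteq S_2(\Fq)\subseteq H_2(\Fqt)$ on which $\det$ restricts to an anisotropic (elliptic) quadratic form, and since such a form is surjective onto $\Fq$, every coset $C+U_S$ contains a singular matrix; thus no hermitian $C$ can satisfy $\det(C-A)\neq 0$ for all $A\in U_S$, which by Lemma \ref{lem:partialspreadcorrespondence} is precisely the non-existence of an extending generator.
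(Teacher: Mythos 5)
You are proving a statement the paper itself does not prove: it quotes this result from Aguglia--Cossidente--Ebert \cite{AgCosEb01}, and the paper's own technique (the character bound of Theorem \ref{thm:boundher}) only yields the \emph{semifield} case, since adjoining an extension matrix and its $\Fp^{\times}$-multiples to a linear spread set needs linearity of $U$. Judged on its merits, your attempt has the right opening (the reduction that an extending generator $m$ must be skew to $\Sigma_0$ is correct, and indeed follows even more directly since $S$ covers every point of $W(3,q)$), but the central configuration is wrong. A pair of skew lines of $\PG(3,q^2)$ does not determine a regulus or a hyperbolic quadric, so ``the transversal regulus of $\{m,m^{\sigma}\}$'' is not well defined; the $\sigma$-invariant transversals are the lines $\langle P,P^{\sigma}\rangle$ with $P\in m$, and there are $q^2+1$ of them, one per point of $m$, not $q+1$. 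Their traces on $\Sigma_0$ are pairwise disjoint (a common point would make $m$ and $m^{\sigma}$ coplanar) and hence form a \emph{regular spread} of $\PG(3,q)$, not a regulus of some $Q^{+}(3,q)$; moreover each transversal is automatically totally isotropic, because the hermitian polarity composed with $\sigma$ is the extended symplectic polarity, so $h(P,P^{\sigma})=0$ identically. The correct statement is therefore: $m$ determines a regular \emph{symplectic} spread $\mathcal{S}_m$ of $W(3,q)$, and $m$ meets $S'$ if and only if $S\cap\mathcal{S}_m\neq\emptyset$. Your claimed $Q_0=Q^{+}(3,q)$ with \emph{both} reguli totally isotropic cannot even exist for odd $q$: both generators through each point lying in that point's symplectic perp would force the quadric's (symmetric) polarity to agree with the (alternating) symplectic one on all of $Q_0$, which is impossible.

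Even after this repair, the step you defer is the entire theorem. That every spread of $W(3,q)$ shares a line with every regular symplectic spread --- dually, that every ovoid of $Q(4,q)$ meets every elliptic-quadric hyperplane section $Q^{-}(3,q)$ --- is exactly the substance of the Aguglia--Cossidente--Ebert result, and your proposed route (``counting the spread lines through the $(q+1)^2$ points of $Q_0$ \ldots should force at least one full regulus line into $S$'') is only a plan, resting on the erroneous $Q_0$. Naive double counting does not close it: for instance, counting flags through the points of the elliptic section, or pairs $(X,Y)$ with $X$ in the ovoid and $Y\in X^{\perp}$ on the section, balances identically whether or not the two sets are disjoint, so no contradiction falls out; a genuinely new idea is needed precisely here. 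Finally, your ``cleaner linear route'' addresses only semifield spreads (so it cannot substitute for the general statement), and as written it contains a non sequitur: surjectivity of the anisotropic form $\det|_{U_S}$ onto $\Fq$ does not give a singular matrix in the coset $C+U_S$, since $\det$ is quadratic rather than additive. It can be repaired: writing $\det(C+u)=\det u + b(C,u)+\det C$ with $b$ the polar form and homogenizing gives a ternary quadratic form over $\Fq$, which is isotropic, and anisotropy of $\det|_{U_S}$ excludes zeros at infinity, so some $u\in U_S$ satisfies $\det(C+u)=0$. With that fix your aside becomes a valid, and genuinely different, proof of the paper's semifield theorem for $n=2$ --- but not of the statement at hand.
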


Theorem \ref{thm:boundher} above now gives the following new result.

\begin{theorem}
The extension of any \emph{semifield} spread in $W(2n-1,q)$ to a partial spread in $H(2n-1,q^2)$ is maximal.
\end{theorem}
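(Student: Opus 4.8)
The plan is to show that a semifield spread in $W(2n-1,q)$, after extension to $H(2n-1,q^2)$, gives a partial spread set in $H_n(\Fqt)$ that cannot be enlarged, by exploiting its linearity over $\Fq$ together with the extremal bound of Theorem \ref{thm:boundher}. First I would translate the geometric statement into the language of partial spread sets via Lemma \ref{lem:partialspreadcorrespondence}. A semifield spread in $W(2n-1,q)$ corresponds, as explained in Section \ref{sect:pre}, to an $\Fq$-linear partial spread set $U$ in $S_n(\Fq)$; its extension to $H(2n-1,q^2)$ corresponds to viewing this same $U$ as a subset of $H_n(\Fqt)$ (the rank is unchanged over the extension field). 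Since $U$ is $\Fq$-linear and consists of $q^n$ matrices together with the zero matrix --- the spread in $W(2n-1,q)$ has size $q^n+1$, so $U$ has size $q^n$ --- it is in particular a \emph{linear} constant rank-distance $n$ set, i.e.\ an $\Fq$-linear partial spread set, of size exactly $q^n$.

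Next I would invoke Theorem \ref{thm:boundher} with $k=n$ in the case where $n$ is the rank, noting that for $k=n$ the bound reads $|U|\leq q^n$ regardless of the parity of $n$ (when $n$ is odd the bound is $q^n$ directly; when $n$ is even it is $q^{2n-k}=q^n$). Thus $U$ already attains the maximum possible size for a \emph{linear} partial spread set in $H_n(\Fqt)$. The key idea is that any enlargement of $U$ as a partial spread set would have to be strictly larger than the maximum size for a linear one, so I must rule out the possibility that adding a single matrix keeps things linear, and more subtly argue that the structure forces maximality as a (not necessarily linear) partial spread set. The precise argument: suppose $U$ is contained in a strictly larger partial spread set $U'$, and pick some $M \in U' \setminus U$. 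Then $\{M - A : A \in U\} \cup \{0\}$ should interact with the $\Fq$-linear structure. The cleanest route is to observe that the $\Fq$-linear span of $U \cup \{M\}$, call it $\langle U, M\rangle_{\Fq}$, remains a partial spread set: for any two distinct elements $X,Y$ of this span, $X - Y$ is again an $\Fq$-linear combination of elements of $U$ and $M$, and I must check it has full rank $n$.

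The main obstacle, and the step I expect to require the most care, is precisely verifying that $\langle U, M\rangle_{\Fq}$ is again a partial spread set, i.e.\ that every nonzero $\Fq$-combination $\lambda M + V$ (with $\lambda \in \Fq$, $V \in U$) has rank $n$. For $\lambda = 0$ this is immediate since $U$ is a partial spread set. For $\lambda \neq 0$, after scaling I must show $M - V$ has rank $n$ for every $V$ in the $\Fq$-span of $U$, i.e.\ for all $q^n$ elements $V \in U$; this uses that $M$ lies in a partial spread set \emph{with} $U$, so $M - A$ has full rank for the $A \in U$, but I must upgrade this from the given generators $A$ to arbitrary $\Fq$-combinations $V$. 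This is where the semifield (commutative, by Kantor's characterization) structure and the $\Fq$-linearity of $U$ must be combined carefully: closure of $U$ under $\Fq$-scaling means $\lambda^{-1} M - (\lambda^{-1}V)$ reduces the general case to $M - A$ with $A \in U$. Once this is established, $\langle U, M\rangle_{\Fq}$ is a linear partial spread set of size $|U| \cdot q = q^{n+1} > q^n$, contradicting Theorem \ref{thm:boundher}. Hence no such $M$ exists and $U$ is maximal, so its extension as a partial spread of $H(2n-1,q^2)$ is maximal by Lemma \ref{lem:partialspreadcorrespondence}. I would double-check the scaling reduction, since it is the linchpin that converts a single added element into a forbidden large \emph{linear} object.
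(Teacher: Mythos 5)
Your proposal follows essentially the same route as the paper: pass to the partial spread set $U\subseteq S_n(\FF_q)\subseteq H_n(\FF_{q^2})$ of size $q^n$, suppose a matrix $M$ extends it, use the scaling reduction $\mathrm{rank}(\lambda M+V)=\mathrm{rank}(M-(-\lambda^{-1}V))$ with $-\lambda^{-1}V\in U$ to show the linear span of $U\cup\{M\}$ is again a \emph{linear} partial spread set, and derive a contradiction with Theorem \ref{thm:boundher}. The one imprecision is your reliance on $\FF_q$-linearity of $U$: a semifield spread set is only guaranteed to be linear over some subfield of $\FF_q$ (see Section \ref{sect:pre}), so the paper runs the identical argument over the prime field $\FF_p$, obtaining a linear partial spread set of size $pq^n>q^n$ rather than $q^{n+1}$; your reduction works verbatim with $\lambda\in\FF_p^{\times}$, so this is a trivially repaired overstatement rather than a gap. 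Also, your worry about ``upgrading from generators to arbitrary combinations'' is a non-issue, since $U$ is already the full set of $q^n$ matrices and the hypothesis that $U\cup\{M\}$ is a partial spread set gives $\mathrm{rank}(M-A)=n$ for every $A\in U$ directly; likewise the commutativity of the semifield plays no role in the maximality argument.
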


\begin{proof}
Every semifield spread in $W(2n-1,q)$ is equivalent to a spread $D_U$, where $U$ is a linear spread set in $S_n(\Fq)$, and $|U|=q^n$. Consider $U$ now as a partial spread set in $H_n(\Fqt)$. Suppose there exists some $A \in H_n(\Fqt)$ such that $U \cup {A}$ is a partial spread set, $A \notin U$. Then $\det(A-B) \ne 0$ for all $B \in U$. But then $\det(\lambda A-B) \ne 0$ for all $B \in U$, $\lambda \in \Fp^{\times}$, and so $\langle A,U\rangle$ would be a linear partial spread set of size $pq^n$, contradicting Theorem \ref{thm:boundher}. Hence $U$ is a maximal partial spread set.
\end{proof}

The question remains open whether the extension of every (non-linear) symplectic spread in $W(2n-1,q)$ is a maximal partial spread in $H(2n-1,q^2)$ for $n$ even, $n>2$. Note that in the case $n$ odd, this was first proven for all spreads in $W(2n-1,q)$ in \cite{Luyckx08}, which was extended to all partial spreads of size $q^n+1$ in $H(2n-1,q^2)$ in \cite{Van09}. 

We now turn our attention to the question of the existence of an interval of integers such that, for each integer contained, there exists a maximal partial spread of that size. This question has received attention for the case of $\PG(3,q)$ (\cite{Hed2002}), $W(3,q)$ and $Q(4,q)$ (\cite{RoSt2010}, \cite{PeRoSt2010}, \cite{RotteyStorme2012}). We now construct maximal partial spreads in $H(3,q^2)$ for a range of sizes. Known results on the spectrum of sizes of maximal partial spreads in $H(3,q^2)$ can be found in for example \cite{DeBKlMet12}, \cite{CimFack05}. These partial spreads have received particular attention due to their equivalence with partial ovoids in the elliptic quadric $Q^{-}(5,q)$, which is the dual generalized quadrangle (see for instance \cite{PaTh09}). Though these new maximal partial spreads are not in general the largest nor smallest known, the authors know of no other constructions for an interval of sizes in this space.
\begin{theorem}
There exists a maximal partial spread in $H(3,q^2)$ of size $N$ for every integer $N$ in the interval $[q^2+1,q^2+q]$.
\end{theorem}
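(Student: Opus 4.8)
The plan is to translate the statement to the matrix side using Lemma~\ref{lem:partialspreadcorrespondence}: a maximal partial spread of $H(3,q^2)$ of size $N$ corresponds exactly to a maximal partial spread set of $H_2(\Fqt)$ of size $N-1$. Thus it suffices to construct, for each $M$ in the interval $[q^2,\,q^2+q-1]$, a maximal partial spread set of $H_2(\Fqt)$ of size $M$. I would first fix a geometric model. Writing a hermitian matrix as a triple $(a,c,b)$ with $a,c\in\Fq$ and $b\in\Fqt$, one has $\det(a,c,b)=ac-b\overline{b}$, an $\Fq$-quadratic form $Q$ on $H_2(\Fqt)\cong\mathrm{AG}(4,q)$ whose projective zero set is the elliptic quadric, i.e. an ovoid $\mathcal{O}$ of $\PG(3,q)$ with $q^2+1$ points and no lines. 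In this model a partial spread set is precisely an affine point set $U$ whose \emph{internal directions} $\langle u-u'\rangle$, for distinct $u,u'\in U$, all avoid $\mathcal{O}$; and $U$ is maximal exactly when every affine point $p\notin U$ determines at least one internal direction to $U$ lying on $\mathcal{O}$.

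Next I would record the two structural facts this model makes transparent. An affine subspace all of whose directions avoid $\mathcal{O}$ is automatically a partial spread set; in particular a $2$-dimensional affine subspace whose direction line $\ell$ is external to $\mathcal{O}$ gives the size $M=q^2$, the extension of a symplectic (indeed semifield) spread, whose maximality is already guaranteed. The second fact is the obstruction: because every plane of $\PG(3,q)$ meets the ovoid $\mathcal{O}$, one \emph{cannot} enlarge such a full $2$-plane, so every construction of size strictly above $q^2$ must be genuinely non-linear. Accordingly, for the remaining sizes $M=q^2+t$ with $1\le t\le q-1$ I would start from a non-linear base and adjust by a controlled number of points: concretely, one natural scheme is to delete from the linear base the points of one affine line (in a suitable external direction), creating room, and then re-insert a batch of points lying along an auxiliary affine line or conic whose direction is an external point of $\mathcal{O}$, chosen so that all new internal directions---those among the inserted points and those between inserted points and the retained base---continue to avoid $\mathcal{O}$. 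Varying $t$ over $\{1,\dots,q-1\}$ then realizes every remaining integer size, while the endpoint $M=q^2$ handles $N=q^2+1$.

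The verification that the resulting $U$ is a partial spread set reduces to finitely many direction conditions, each an explicit statement that a value $ac-b\overline{b}$ is nonzero; these are routine once the inserted points are parametrized, using that $b\overline{b}$ is the anisotropic norm form of $\Fqt/\Fq$. The genuine difficulty, and the step I expect to be the main obstacle, is proving \emph{maximality} of each constructed $U$ of size $M=q^2+t$: one must show that every affine point $p\notin U$ is blocked, i.e. determines at least one internal direction to $U$ that lands on $\mathcal{O}$. Because these spreads are small there is a priori a great deal of room to add points, so maximality is a covering statement rather than a size bound, and the upper bounds of the previous sections (the Thas-type bound of Remark~\ref{rem:thasnew}, or Theorem~\ref{thm:boundher}) do not by themselves force it. I would establish it by a double-counting argument governed by the geometry of the elliptic quadric: for a fixed base point $u\in U$, the affine points $p$ with $\langle p-u\rangle\in\mathcal{O}$ form a union of $q^2+1$ punctured affine lines, one per ovoid point, and one must show that as $u$ ranges over $U$ these blocked sets cover all of $\mathrm{AG}(4,q)\setminus U$. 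The crux is to exploit the precise tangent- and secant-line structure of $\mathcal{O}$, together with the special placement of the inserted points, to rule out any unblocked $p$; it is here that the exact interval $[q^2+1,\,q^2+q]$, rather than a wider range, should emerge.
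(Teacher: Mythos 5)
Your setup is sound and your overall strategy (pass to partial spread sets of $H_2(\Fqt)$ via Lemma~\ref{lem:partialspreadcorrespondence}, view $\det$ as an elliptic quadratic form on $\mathrm{AG}(4,q)$ so that partial spread sets are point sets whose internal directions miss an ovoid $\mathcal{O}$ of $\PG(3,q)$, start from a linear example of size $q^2$ and modify it) is genuinely the same shape as the paper's argument, which deletes from the base $\bigl\{\npmatrix{0&\alpha\\\overline{\alpha}&0}:\alpha\in\Fqt\bigr\}$ the $\delta$ elements indexed by a subset $\Delta\subseteq\Fq$ containing $0$ and reinserts the $2\delta-1$ matrices $\npmatrix{a&a\\a&0}$, $\npmatrix{0&a\\a&\mu a}$ for $a\in\Delta$. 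But as written your argument has a genuine gap on both of the points that actually constitute the proof. First, the construction is never pinned down: ``re-insert a batch of points lying along an auxiliary affine line or conic \ldots chosen so that all new internal directions continue to avoid $\mathcal{O}$'' asserts the existence of such a choice without exhibiting it or verifying the finitely many nonvanishing conditions $ac-b\overline{b}\neq 0$; the existence of a suitable parameter (the paper's $\mu$ with $x^2+y^2+(\mu-2)xy$ anisotropic) is exactly the kind of thing that must be checked. Second, and more importantly, you explicitly defer the maximality proof, which is the substance of the theorem; a covering argument via the tangent/secant structure of $\mathcal{O}$ is plausible but is not carried out, and nothing in your text rules out an unblocked point.

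For comparison, the paper's maximality argument is much shorter than the double count you anticipate: if $\npmatrix{x&y\\\overline{y}&z}$ extended $U_\delta$, then $xz$ would avoid the value set $\{N(y-\alpha):\alpha\in\Fqt\setminus\Delta\}$; since the norm $N:\Fqt\to\Fq$ is $(q+1)$-to-one on nonzero elements and $|\Fqt\setminus\Delta|\geq q^2-q$, that value set already contains all of $\Fq^{\times}$ (and all of $\Fq$ when $y\notin\Delta$). This forces $y\in\Delta$ and $xz=0$, and then one of the inserted matrices $\npmatrix{y&y\\y&0}$ or $\npmatrix{0&y\\y&\mu y}$ produces a rank-one difference. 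Note that the hypothesis $\delta\leq q$ is used precisely in the estimate $|\Fqt\setminus\Delta|\geq q^2-q$, which is where the right endpoint $q^2+q$ of the interval really comes from --- a cardinality count on norm fibres rather than the tangent-line geometry you propose. To complete your proof you would need to supply a concrete insertion and an argument of this kind (or an equivalent covering argument); as it stands the proposal is a programme rather than a proof.
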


\begin{proof}
Let $\delta$ be some integer in $\{1,\ldots,q\}$. Choose some arbitrary subset $\Delta$ of $\Fq$ of size $\delta$, containing $0$.
Then define the set
\[
U_{\delta} = \left\{ \npmatrix{a&a\\a&0},\npmatrix{0&a\\a&\mu a}~:~a \in \Delta \right\} \cup \left\{ \npmatrix{0&\alpha\\\overline{\alpha}&0}~:~\alpha \in \Fqt \setminus \Delta \right\},
\]
where $\mu$ is chosen in $\Fq$ such that $x^2+y^2+(\mu-2)xy=0$ has no non-trivial solutions over $\Fq$. We claim that $U_{\delta}$ is a maximal partial spread set in $H_2(\Fqt)$, and so $D_{U_{\delta}}$ is a maximal partial spread in $H(3,q^2)$ and $|D_{U_{\delta}}| = |U_{\delta}|+1 = q^2+\delta$.

The fact that this is a partial spread set is easily verified. It remains to show that it is maximal. First note that 
\[
\{N(y-\alpha)~:~\alpha \in \Fqt\setminus\Delta\} = \left\{\begin{array}{cc} \Fq^{\times} &\textrm{if $y \in \Delta$}\\\Fq &\textrm{otherwise}. \end{array}\right.
\]
This is because, counting multiplicities, this set has size $q^2-\delta \geq q^2-q$. Clearly $0$ has multiplicity $0$ in the first case and $1$ in the second. If some non-zero $\lambda \in \Fq$ were not in this set, it could have size at most $(q-2)(q+1)+1 = q^2-q-1$, a contradiction.

Now suppose there exists some $\npmatrix{x&y\\\overline{y}&z}$ which extends $U_{\delta}$. Then $xz \notin \{N(y-\alpha):\alpha \in \Fqt\setminus\Delta\}$. By the previous argument, we must have $y \in \Delta$ and $xz = 0$. Therefore either $x=0$ or $z=0$. Suppose first $z=0$. Then 
\[
\npmatrix{x&y\\y&0} - \npmatrix{y&y\\y&0}
\]
is not invertible, a contradiction. Similarly in the case $x=0$ we get a contradiction, proving that such a matrix can't exist, thus proving maximality.

It is clear that $|U_{\delta}| = (q^2 - \delta)+ (2 \delta-1) = q^2+\delta-1$.
\end{proof}

Note that when $q=2$, $q^2+q=6$, which is the largest possible partial spread (\cite[Remark 4.4]{DeBKlMetSt08}.

We will see in the next section that there always exist partial spreads of $H(2n-1,q^2)$ of size greater than $q^n+1$ for all $n$ even.

\section{Large constant rank-distance sets in $H_n(\Fqt)$}
\label{sect:crs}

Constant rank-distance sets of rank less than $n$ have received far less attention than partial spread sets, with most of the focus applied to subspaces. See \cite{JSThesis} and the references therein. We saw in Theorem \ref{thm:oddgen} that if $k$ is odd, the maximum size of constant rank-distance $k$ sets in $H_n(\Fqt)$ is $q^k$, and this bound can be obtained by a linear set.

For even rank $k>0$, we can always find constant rank-distance sets of size larger than the largest known linear constant rank-distance set, due to the following construction. 

\begin{theorem}
\label{thm:crscon}
Suppose there exists a partial $(r-1)$-spread, $r\geq 1$, of $\PG(n-1,q^2)$ of size $N$. Then there exists a constant rank-distance $k = 2r$ set in $H_n(\Fqt)$ of size $N$.

\end{theorem}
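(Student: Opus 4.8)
The plan is to translate the partial spread into subspaces of $W = \Fqt^n$ and attach to each one a hermitian matrix of the form $M\overline{M}^T$. A partial $(r-1)$-spread of $\PG(n-1,q^2)$ of size $N$ is precisely a collection $V_1,\ldots,V_N$ of pairwise disjoint $r$-dimensional $\Fqt$-subspaces of $W$, meaning $V_i \cap V_j = \{0\}$ for $i \ne j$. For each $i$ I would choose an $n \times r$ matrix $M_i$ over $\Fqt$ whose columns form a basis of $V_i$, and define
\[
H_i := M_i \overline{M_i}^T.
\]
Each $H_i$ is hermitian, since $\overline{H_i}^T = \overline{M_i\,\overline{M_i}^T}^T = M_i \overline{M_i}^T = H_i$. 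The goal is then to show that $\{H_1,\ldots,H_N\}$ satisfies property $(i)$ in the definition of a constant rank-distance $2r$ set. By the observation made in the introduction (a set satisfying only $(i)$ yields a genuine constant rank-distance set of the same size, here via $\{H_1 - H_i\}$), this suffices to prove the theorem; note also that since $r\geq 1$ the ranks below are positive, so the $H_i$ are automatically distinct and the set really has $N$ elements.

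The main computation is to show $\rank(H_i - H_j) = 2r$ for $i \ne j$. First I would record the easy upper bound: the column space of $H_i - H_j = M_i \overline{M_i}^T - M_j \overline{M_j}^T$ is contained in $V_i + V_j$, which has dimension $2r$ because $V_i \cap V_j = \{0\}$; hence $\rank(H_i - H_j) \le 2r$. For the lower bound I would compute the kernel. If $(H_i - H_j)u = 0$, then $M_i(\overline{M_i}^T u) = M_j(\overline{M_j}^T u)$, where the left-hand side lies in $V_i$ and the right-hand side in $V_j$; disjointness forces both to be zero, and since the columns of $M_i$ and of $M_j$ are each linearly independent this gives $\overline{M_i}^T u = 0$ and $\overline{M_j}^T u = 0$.

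Thus the kernel of $H_i - H_j$ is the common solution space of the $2r$ linear equations given by the rows of $\overline{M_i}^T$ and $\overline{M_j}^T$. These rows are the conjugates of a basis of $V_i + V_j$; since conjugation is a field automorphism it sends linearly independent families to linearly independent families, so the $2r$ rows are independent and the kernel has dimension $n - 2r$. Therefore $\rank(H_i - H_j) = 2r$, establishing property $(i)$. (As an aside, the same argument shows $\ker H_i = \{u : \overline{M_i}^T u = 0\}$ has dimension $n-r$, so each $H_i$ has rank $r$.) I expect the step needing the most care to be this lower-bound kernel computation: specifically the point that $V_i \cap V_j = \{0\}$ is exactly what lets one split $M_i a = M_j b$ into the separate equations $M_i a = 0$ and $M_j b = 0$, together with the mild but essential observation that conjugation preserves linear independence, so that the $2r$ conjugated rows are genuinely independent and the rank attains the value $2r$ rather than dropping.
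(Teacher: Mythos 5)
Your proposal is correct and uses exactly the paper's construction: pick a basis matrix $M_i$ for each spread element and form the hermitian matrix $M_i\overline{M_i}^T$, then invoke the translation trick from the introduction to upgrade property $(i)$ to a genuine constant rank-distance set. The only difference is in how the key identity $\rank(H_i-H_j)=2r$ is verified: the paper factors $H_i-H_j=\npmatrix{M_i & M_j}\npmatrix{I_r & 0_r\\ 0_r & -I_r}\npmatrix{\overline{M_i}^T\\ \overline{M_j}^T}$ and reads off the rank from the three factors, whereas you compute the kernel directly (using disjointness to split $M_ia=M_jb$ and the fact that conjugation preserves independence); both verifications are correct and of comparable length.
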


\begin{proof}
Let $k=2r$. Let $D$ be a partial $(r-1)$-spread in $\PG(n-1,q^2)$ of size $N$. For each $S\in D$, choose some matrix $X_S \in M_{n \times r}$ whose column span is equal to $S$ (for example, by choosing a basis for $S$ and forming a matrix with these vectors as its columns). Next define
\[
A_S = X_S \overline{X_S}^T \in H_n(\Fqt).
\]
Finally define $U = \{A_S~:~S \in D\}$. We claim that $\rank(A_S-A_T)=k$ for all $S,T \in D$, $S \ne T$. For let $A_S,A_T \in U$. Then
\begin{align*}
A_S - A_T &= X_S \overline{X_S}^T - X_T \overline{X_T}^T\\
					&= \npmatrix{X_S & X_T} \npmatrix{I_r &0_r\\0_r&0_r} \npmatrix{\overline{X_S}^T \\\overline{X_T}^T} - \npmatrix{X_S & X_T} \npmatrix{0_r &0_r\\0_r&I_r} \npmatrix{\overline{X_S}^T \\\overline{X_T}^T}\\
					&= \npmatrix{X_S & X_T} \npmatrix{I_r &0_r\\0_r&-I_r} \npmatrix{\overline{X_S}^T \\\overline{X_T}^T}
\end{align*}
But $S$ and $T$ intersect trivially, and hence the matrix $\npmatrix{X_S & X_T}$ has rank $2r=k$. But $\npmatrix{I_r &0_r\\0_r&-I_r}$ also has rank $k$, and hence $A_S-A_T$ has rank $k$, as claimed. As shown in the introduction, this implies the existence of a constant rank-distance $k$ set in $H_n(\Fqt)$ of order $|U|=N$, proving the result.
\end{proof}

\begin{corollary}
\label{cor:crscon}
Suppose there exists a partial $(r-1)$-spread, $r\geq 1$, of $\PG(n-1,q^2)$ of size $N$. 
Then there exists a constant dimension $n$, constant distance $2k$ code in $H(2n-1,q^2)$ of size $N$.
\end{corollary}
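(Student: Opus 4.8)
The corollary is an immediate consequence of combining the two results that precede it in the excerpt, so the plan is to chain them together. First I would invoke Theorem \ref{thm:crscon}: given a partial $(r-1)$-spread of $\PG(n-1,q^2)$ of size $N$ (with $r \geq 1$), this produces a constant rank-distance $k = 2r$ set $U$ in $H_n(\Fqt)$ of size $N$. Then I would feed this set $U$ directly into Lemma \ref{lem:gendimcorrespondence}, which states that a constant rank-distance $k$ set in $H_n(\Fqt)$ of size $N$ yields a constant dimension $n$, constant distance $2k$ code in $H(2n-1,q^2)$ of size $N$. Composing the two gives exactly the claimed code.

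The only point requiring care is a bookkeeping check on the distance parameter, since the corollary statement writes the distance as $2k$ while Theorem \ref{thm:crscon} produces a set of rank-distance $k = 2r$. The plan is to note that in the notation of the corollary, $k$ should be read as $r$ (matching the ``$k=2r$'' abbreviation introduced in the proof of Theorem \ref{thm:crscon}), so that the rank-distance of the intermediate set is $2k$ and Lemma \ref{lem:gendimcorrespondence} then delivers a constant distance $2(2k)$ code; alternatively, if $k$ denotes the rank-distance itself, the output distance is $2k$. Either way this is a purely notational alignment, and I would simply state the composition cleanly to avoid ambiguity, writing: applying Lemma \ref{lem:gendimcorrespondence} to the constant rank-distance set furnished by Theorem \ref{thm:crscon} yields the stated constant dimension, constant distance code of the same size $N$.

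There is no substantive mathematical obstacle here — both ingredients are already established, and the proof is a one-line appeal to each. The ``hard part,'' such as it is, is only ensuring the parameters are transcribed consistently between the two cited results so that the final distance matches the statement. I would therefore keep the proof to a single sentence or two, explicitly citing both Theorem \ref{thm:crscon} and Lemma \ref{lem:gendimcorrespondence}, and perhaps adding the observation that since $k$ is even (being twice an integer), this construction populates precisely the even-distance range that the linear bounds of Theorem \ref{thm:boundher} leave open, which is the point of this final section.
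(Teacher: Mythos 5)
Your proof is correct and is exactly the paper's own argument: the paper's proof reads ``This follows immediately from Lemma \ref{lem:gendimcorrespondence} and Theorem \ref{thm:crscon}.'' Your remark on aligning the parameter $k=2r$ between the two cited results is a reasonable bit of care, but no further comment is needed.
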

\begin{proof} This follows immediately from Lemma \ref{lem:gendimcorrespondence} and Theorem \ref{thm:crscon}.
\end{proof}

\begin{remark}
In \cite[Theorem 4.2]{Beut75}, Beutelspacher showed that if $n=mr+d$, for non-negative integers $m,d$ with $0<d < r$ and $m \geq 2$, then there exists a partial of size 
\[
q^{r+d}\left(\frac{q^{(m-1)r}-1}{q^r-1}\right)+1 \geq q^{(m-1)r+d}+1 = q^{n-r}+1.
\]
If $n=mr$, $m \geq 2$, then by \cite{Segre64} there exists an $(r-1)$-spread of $\PG(n-1,q)$ of size
\[
\left(\frac{q^{mr}-1}{q^r-1}\right) \geq q^{n-r}+1.
\]
Hence there exists a constant rank-distance $k=2r$ set in $H_n(\Fqt)$ of size $q^{2(n-r)}+1 = q^{2n-k}+1$. This exceeds the size of the largest linear constant rank-distance $k$ set, which we saw in Theorem \ref{thm:boundher} has size at most $q^{2n-k}$.
\end{remark}

\begin{remark}\label{rem:magma}
Suppose $k=2$. The points of $\PG(n-1,q^2)$ form a $0$-spread of size $\frac{q^{2n}-1}{q^2-1}$. Hence there exists a constant rank-distance $2$ set of size $\frac{q^{2n}-1}{q^2-1}$ in $H_n(\Fqt)$ for all $q$ and all $n\geq 2$.

When $q=2$, $n=3$ this gives a constant rank-distance $2$ set of size 21. A computer calculation using the computer algebra package MAGMA \cite{MAGMA}, and independently using GAP \cite{GAP}, gave that the spectrum of sizes of maximal constant rank-distance $2$ sets in $H_3(\FF_{2^2})$ is $\{8,10,11,12,13,14,16,17,21\}$.  Hence the construction from Theorem \ref{thm:crscon} is maximal in this case. It is not clear whether this construction leads to maximal constant rank-distance sets in general.
\end{remark}

\begin{remark}
\label{rem:spgen}
Note that if $n=k=2r$, there exists an $(r-1)$-spread of $\PG(n-1,q^2)$, which has size $\frac{q^{2n}-1}{q^{2r}-1} = q^n+1$. Hence the construction from Theorem \ref{thm:crscon} gives a partial spread set in $H_n(\Fqt)$ of size $q^n+1$, and therefore  (by Lemma \ref{lem:partialspreadcorrespondence}) a partial spread of size $q^n+2$ in $H(2n-1,q^2)$, which is larger than the largest possible linear partial spread set.
\end{remark}

\section*{Acknowledgements}
The research of the second and the last author is supported by the Research Foundation Flanders-Belgium (FWO-Vlaanderen).
The research of the third author is supported by a Progetto di Ateneo from Universit\`a di Padova (CPDA113797/11).
\bibliographystyle{plain}

\end{document}